\theoremstyle {plain}
\newtheorem {thm}{Theorem}[section]
\newtheorem {prop}[thm]{Proposition}
\newtheorem {lem}[thm]{Lemma}
\newtheorem {cor}[thm]{Corollary}
\theoremstyle {definition}
\newtheorem {defn}[thm]{Definition}
\theoremstyle {remark}
\newtheorem {rem}[thm]{Remark}
\newtheorem {exmp}[thm]{Example}
\newtheorem{con}[thm]{Conclusion}
\DeclareMathOperator{\Mon}{Mon}
\DeclareMathOperator{\LM}{LM}
\DeclareMathOperator{\tail}{tail}
\DeclareMathOperator{\wdeg}{w-deg}
\DeclareMathOperator{\HF}{HF}
\newcommand{\C}{{\mathbb C}}
\newcommand{\F}{{\mathbb F}}
\newcommand{\N}{{\mathbb N}}
\newcommand{\Q}{{\mathbb Q}}
\newcommand{\Z}{{\mathbb Z}}
\newcommand{\gen}[1]{\left\langle #1 \right\rangle}
\newcommand{\set}[2]{\left\{{#1},\ldots,{#2}\right\}}
\newcommand{\singular}{{\sc Singular }}
\begin{document}

\bibliographystyle{alpha}

\title{Parallelization of Modular Algorithms}

\author{Nazeran Idrees}
\address{Nazeran Idrees\\ Abdus Salam School of Mathematical Sciences\\ GC University\\ 
Lahore\\ 68-B\\ New Muslim Town\\ Lahore 54600\\ Pakistan}
\email{nazeranjawwad@gmail.com}

\author{Gerhard Pfister}
\address{Gerhard Pfister\\ Department of Mathematics\\ University of Kaiserslautern\\
Erwin-Schr\"odinger-Str.\\ 67663 Kaiserslautern\\ Germany}
\email{pfister@mathematik.uni-kl.de}
\urladdr{http://www.mathematik.uni-kl.de/$\sim$pfister} 

\author{Stefan Steidel}
\address{Stefan Steidel\\ Department of Mathematics\\ University of Kaiserslautern\\ 
Erwin-Schr\"odinger-Str.\\ 67663 Kaiserslautern\\ Germany}
\email{steidel@mathematik.uni-kl.de}
\urladdr{http://www.mathematik.uni-kl.de/$\sim$steidel} 

\keywords{Gr\"obner bases, primary decomposition, modular computation, parallel computation}

\thanks{Part of the work was done at ASSMS, GCU Lahore -- Pakistan}

\date{\today}

\maketitle

\begin{abstract}
In this paper we investigate the parallelization of two modular algorithms. In fact, we
consider the modular computation of Gr\"obner bases (resp. standard bases) and the modular 
computation of the associated primes of a zero--dimensional ideal and describe their parallel 
implementation in \textsc{Singular}.
Our modular algorithms to solve problems over $\Q$ mainly consist of three parts, solving the 
problem modulo $p$ for several primes $p$, lifting the result to $\Q$ by applying Chinese 
remainder resp. rational reconstruction, and a part of verification.
Arnold proved using the Hilbert function that the verification part in the modular algorithm to
compute Gr\"obner bases can be simplified for homogeneous ideals (cf. \cite{A03}).
The idea of the proof could easily be adapted to the local case, i.e. for local orderings and
not necessarily homogeneous ideals, using the Hilbert--Samuel function (cf. \cite{Pf07}).
In this paper we prove the corresponding theorem for non--homogeneous ideals in case of
a global ordering.
\end{abstract}


\section{Introduction} \label{secIntro}
 
We consider an ideal in a polynomial ring over the rationals. In section \ref{secModStd} we 
describe a parallel modular implementation of the Gr\"obner basis (resp. standard basis) 
algorithm. Afterwards we restrict ourselves to the case of a zero--dimensional ideal
and introduce a parallel modular implementation of the algorithm to compute the associated 
primes in section \ref{secAssPrimes}. Finally we give a couple of examples with corresponding 
timings and some conclusions in section \ref{secExTime}. Both algorithms are implemented in 
\textsc{Singular}. The Gr\"obner basis resp. standard basis algorithm can be found in the library 
\texttt{modstd.lib} and the algorithm for computing the associated primes in 
\texttt{assprimeszerodim.lib}. They are included in the release \textsc{Singular 3-1-2}.

\vspace{0.2cm}

The task to compute a Gr\"obner basis $G$ of an ideal $I$ using modular methods consists of 
three steps. In the first step, we compute the Gr\"obner basis modulo $p$ for sufficiently many 
primes $p$ and, in the second step, use Chinese remainder and rational reconstruction to obtain 
a result over $\Q$. 
In the third step, we have to verify that the result obtained this way is correct, i.e. to verify that $I = 
\gen G$ and $G$ is a Gr\"obner basis of $\gen G$. If this fails we go back to the first step. The 
third step is usually at least as time consuming as the first step. Omitting the third step 
would produce a Gr\"obner basis only with high probability and the result could be wrong in 
extreme situations. It is known that some of the commercial computer algebra systems have 
problems in this direction.\footnote{Let $N$ be the product of all primes smaller than $2^{32}$ 
and $I = \langle v+w+x+y+z, vw+wx+xy+yz+vz, vwx+wxy+xyz+vyz+vwz, vwxy+wxyz+vxyz+vwyz
+vwxz, vwxyz+N \rangle \subseteq \Q[v,w,x,y,z]$. Then \textsc{Magma V2.16--11} (64-bit version) 
computes a wrong Gr\"obner basis, in particular it computes the Gr\"obner basis of the ideal $J 
= \langle v+w+x+y+z, vw+wx+xy+yz+vz, vwx+wxy+xyz+vyz+vwz, vwxy+wxyz+vxyz+vwyz+vwxz, 
vwxyz \rangle \subseteq \Q[v,w,x,y,z]$ which obviously differs from $I$.}

\vspace{0.2cm}

Arnold proved using the Hilbert function that the verification part in the modular algorithm to
compute Gr\"obner bases can be simplified for homogeneous ideals (cf. \cite{A03}): Let $I 
\subseteq \Q[x_1,\ldots,x_n]$ be a homogeneous ideal, $>$ a global monomial ordering and 
$G \subseteq \Q[x_1,\ldots,x_n]$ be a set of polynomials such that $I \subseteq \gen G$, $G$
is a Gr\"obner basis of $\gen G$ and $\LM(G) = \LM(I\F_p[x_1,\ldots,x_n])$ for some prime
number $p$ where $\LM(G)$ denotes the set of leading monomials of $G$ w.r.t. $>$, then
$G$ is a Gr\"obner basis of $I$.
The idea of the proof could easily be adapted to the local case, i.e. for local orderings and
$I$ not necessarily homogeneous, using the Hilbert--Samuel function (cf. \cite{Pf07}).
In this paper we prove the corresponding theorem for non--homogeneous ideals in case of
a global ordering.
Two important assumptions of the theorem are the facts that $I \subseteq \gen G$ and $G$ is 
a Gr\"obner basis of $\gen G$. This verification can be very time consuming in a negative case. 
Hence, we use a so--called \textsc{pTestSB} which is one of the new ideas for our algorithm. 
Therefore we randomly choose a prime number $p$ which has not been used in the previous 
computations and perform the verification modulo $p$. Only if the \textsc{pTestSB} is positive 
we perform the verification over $\Q$, and the last required condition that $\LM(G) = 
\LM(I\F_p[x_1,\ldots,x_n])$ is then automatically fulfilled.

\vspace{0.2cm}

The implementation of our algorithm as \singular library implies that we did not change the
kernel routines of \textsc{Singular}. We plan to implement the algorithm in the kernel of \singular in
future. For this purpose we can apply the ideas of Gr\"abe (cf. \cite{G94}) - using multimodular 
coefficients - and Traverso (cf. \cite{T89}) - using the trace--algorithm. The trace--algorithm would 
speed up the computations in positive characteristic a lot. We compute a Gr\"obner basis of an ideal 
$I \subseteq \Q[x_1,\ldots,x_n]$ over $\F_p[x_1,\ldots,x_n]$ for a random prime $p$ and keep in mind 
the zero--reductions of the $s$--polynomials such that we do not perform these reductions in any other
Gr\"obner basis computation over $\F_q[x_1,\ldots,x_n]$ for primes $q \neq p$. We do not need 
these information, i.e. the guarantee that we really obtain a Gr\"obner basis over $\F_q[x_1,\ldots,
x_n]$, since we have the verification step - that the lifted result over $\Q[x_1,\ldots,x_n]$ is a 
Gr\"obner basis of $I$ - at the end anyway. 

\vspace{0.2cm}

Our idea regarding the primary decomposition of a zero--dimensional ideal $I \subseteq \Q[X]$ is to
compute the associated primes $M_1,\ldots,M_s$ of $I$ and use seperators $\sigma_1,\ldots,
\sigma_s$\footnote{We call $\sigma_i$ a \emph{seperator} w.r.t. $M_i$ if $\sigma_i \notin M_i$ and 
$\sigma_i \in M_j$ for $j \neq i$.} such that the saturation $I:\sigma_i^\infty$ of $I$ w.r.t. $\sigma_i$ is 
the primary ideal corresponding to $M_i$ (cf. \cite{SY96}).
The computation of the associated primes is based on the so--called Shape Lemma (Proposition 
\ref{propCoordinateChange}(2)). Here, one new idea is to choose a generic linear form $r = a_1x_1 + 
\ldots + a_{n-1}x_{n-1} + x_n$ with $a_1,\ldots,a_{n-1} \in \Z$ and a random prime $p$ to test if $\dim_{\F_p}
(\F_p[X]/I\F_p[X]) = \dim_{\F_p}\big(\F_p[x_n]/(\psi(I)\F_p[X] \cap \F_p[x_n])\big)$, i.e. $\psi(I)\F_p[X] = 
\gen{x_1-g_1(x_n), \ldots, x_{n-1}-g_{n-1}(x_n), F(x_n)}$ whereat $\psi$ denotes the linear map defined by 
$\psi(x_i) = x_i$ for $i=1,\ldots,n-1$ and $\psi(x_n)=2x_n-r$. If this test called \textsc{pTestRad} is positive 
then the ideal $I$ in $\Q[X]$ has the same property with high probability.
If the test is negative then we compute the radical of $I$ using the idea of Krick and Logar (Proposition
\ref{propRad}(1)) combined with modular methods, and replace $I$ by $\sqrt I$. Afterwards we compute 
$\gen F = \langle I, T-r\rangle_{\Q[X,T]} \cap \Q[T]$, again using modular methods, i.e. we compute 
$F^{(p)}$ such that $ \langle F^{(p)} \rangle = \langle I, T-r \rangle_{\F_p[X,T]} \cap \F_p[T]$ and 
$\deg(F^{(p)}) = \dim_\Q(\Q[X]/I)$ for sufficiently many primes $p$, and we use Chinese remainder 
and rational reconstruction to obtain $F \in \Q[T]$. 
The verification is the test whether it holds $F(r) \in I$ and no proper factor of $F(r)$ is in $I$. If $F = 
F_1^{\nu_1} \cdots F_s^{\nu_s}$ is the factorization of $F$ in $\Q[T]$ into irreducible factors then $M_1 = 
\langle I, F_1(r)\rangle, \ldots, M_s = \langle I, F_s(r)\rangle$ are the associated primes of $I$.
The new ideas in this approach are the \textsc{pTestRad} described above and the fact that we do not
compute the associated primes in positive characteristic but instead one special generator of the radical,
$F(r)$, which is much better to control.\footnote{The computation of the associated primes in positive
characteristic would create similar problems as the factorization of polynomials: Different behaviour of
splitting in different characteristics. Therefore it is easier and faster to compute $F \in \Q[T]$ and factorize
this polynomial.}

\vspace{0.2cm}

We use the following notation. Let $X=\{x_1,\ldots,x_n\}$ be a set of variables. We denote by 
$\Mon(X)$ the set of monomials, and by $\Q[X]$ the polynomial ring over $\Q$ in these $n$ 
indeterminates. Let $S \subseteq \Q[X]$ be a set of polynomials, then $\LM(S) := \{\LM(f) \mid f 
\in S\}$ is the set of leading monomials of $S$. Given an ideal $I \subseteq \Q[X]$ we can always 
choose a finite set of polynomials $F_I$ such that $I = \gen{F_I}$. 
If $I =\gen{f_1,\ldots,f_r} \subseteq \Q[X]$ and $p$ is a prime number which does not divide any 
denominator of the coefficients of $f_1,\ldots,f_r$ we will write $I_p := \gen{f_1 \mod p,\ldots,f_r 
\mod p} \subseteq \F_p[X]$.

\section{Computing Gr\"obner bases using modular methods} \label{secModStd}

In the following we consider an ideal $I = \gen{f_1,\ldots,f_r} \subseteq \Q[X]$ together with a 
monomial ordering $>$ and set $F_I =\set{f_1}{f_r}$. We assume that $>$ is either global or local. 
Within this section we describe an algorithm for computing a Gr\"obner basis resp. a standard 
basis\footnote{For definitions and properties cf. \cite{GP07}.} $G \subseteq \Q[X]$ of $I$ by using 
modular methods. 

The basic idea of the algorithm is as follows. Choose a set $P$ of prime numbers, compute 
standard bases $G_p$ of $I_p \subseteq \F_p[X]$, for every $p \in P$, and finally lift these 
modular standard bases to a standard basis $G \subseteq \Q[X]$ of $I$. The lifting process 
consists of two steps. Firstly, the set $GP:=\{G_p \mid p \in P\}$ is lifted to $G_N 
\subseteq \Z/N\Z[X]$ with $N:=\prod_{p \in P} p$ by applying the Chinese remainder algorithm 
to the coefficients of the polynomials occuring in $GP$. Since $G_N$ is uniquely determined 
modulo $N$, theory requires $N$ to be larger than the moduli of all coefficients occuring in a 
standard basis of $I$ over $\Q$. This issue is not revisable a priori and will be discussed later in 
this section. Secondly, we obtain $G \subseteq \Q[X]$ by pulling back the modular coefficients 
occuring in $G_N$ to rational coefficients via the Farey rational map\footnote{\emph{Farey 
fractions} refer to rational reconstruction. A definition of \emph{Farey fractions} and the 
\emph{Farey rational map} can be found in \cite{A03},\cite{KG83},\cite{Pf07}; for remarks 
concerning its computation cf. \cite{KG83}.}. 
This map is guaranteed to be bijective provided that $\sqrt{N/2}$ is larger than the moduli
of all coefficients in $G$.\footnote{Remarks on the required bound on the coefficients are 
given in \cite{KG83}.} The latter condition on $N$ concerning the Farey rational map obviously 
implies the former condition concerning the Chinese remainder algorithm. We consequently 
define two corresponding notions that are essential regarding the algorithm.

\begin{defn} \label{defnLucky}
Let $G$ be a standard basis of $I$.
\begin{enumerate}
\item If $G_p$ is a standard basis of $I_p$, then the prime number $p$ is called \emph{lucky 
          for $I$} if and only if $\LM(G) = \LM(G_p)$. Otherwise $p$ is called \emph{unlucky for $I$}.
\item A set $P$ of lucky primes for $I$ is called \emph{sufficiently large for $I$} if and only if 
          $\prod_{p \in P} p \geq \max\{2 \cdot |c|^2 \mid c \,\text{ coefficient occuring in }\, G\}$. 
\end{enumerate}
\end{defn}

Now we can concretize the theoretical idea of the algorithm. Consider a sufficiently large 
set $P$ of lucky primes for $I$ such that none of these primes divides any coefficient 
occuring in $F_I$, compute the set $GP$, and lift this result to a rational standard basis 
$G$ of $I$ as aforementioned. More details can be found in \cite{A03}.

In practice, we have to handle two difficulties since naturally the standard basis $G$ of $I$ 
is a priori unknown. In fact, it is necessary to ensure that every prime number used is lucky 
for $I$, and to decide whether the chosen set of primes is sufficiently large for $I$. 

Therefore, we fix a natural number $s$ and an arbitrary set of primes $P$ of cardinality $s$. 
After having computed the set of standard bases $GP:=\{G_p \mid p \in P\}$ we delete the 
unlucky primes in the following way.

\vspace{0.2cm}

\emph{\textsc{deleteUnluckyPrimesSB:} We define an equivalence relation on 
$(GP,P)$ by \linebreak $(G_p,p) \sim (G_q,q) :\Longleftrightarrow \LM(G_p) = \LM(G_q)$. 
Then the equivalence class of largest cardinality is stored in $(GP,P)$, the others are deleted.} 

\vspace{0.2cm}

With the aid of this method we are able to choose a set of lucky primes with high 
probability. A faulty decision will be compensated by subsequent tests. 

Since we cannot predict if a given set of primes $P$ is sufficiently large for $I$, we have 
to proceed by trial and error. Hence, we lift the set $GP$ to $G \subseteq \Q[X]$, as per the
description at the beginning of this section, and test whether $G$ is already a standard 
basis of $I$. Otherwise we enlarge the set $P$ by $s$ new prime numbers and continue 
analogously until once the test is positive. The test especially verifies whether $G$ is a 
standard basis of $\gen G$, but this computation in $\Q[X]$ can be very expensive if 
$P$ is far away from being sufficiently large for $I$. Hence, we prefix a test in positive 
characteristic that is a sufficient criterion if $P$ is not sufficiently large for $I$.

\vspace{0.2cm}

\emph{\textsc{pTestSB:} We randomly choose a prime number $p \notin P$ such
that $p$ does not divide the numerator and denominator of any coefficient occuring in $F_I$. 
The test is positive if and only if $(G \mod p)$ is a standard basis of $I_p$. We explicitly test 
whether $(f_i \mod p) \in \gen{G \mod p}$ for $i=1,\ldots,r$ and }$(G \mod p) \subseteq 
\texttt{std}(I_p)$\footnote{The procedure \texttt{std} is implemented in \textsc{Singular} and 
computes a Gr\"obner basis resp. standard basis of the input.}. 

\vspace{0.2cm}

This test in positive characteristic accelerates the algorithm enormously. It is much faster 
than in characteristic zero since the standard basis computation in \textsc{pTestSB} is as 
expensive as in any other positive characteristic, i.e., as any other standard basis 
computation within the algorithm. 

If the \textsc{pTestSB} is negative, then $P$ is not sufficiently large for $I$, that is, 
$G$ cannot be a standard basis of $I$ over $\Q$. Contrariwise, if the \textsc{pTestSB} is 
positive, then $G$ is most probably a standard basis of $I$.

Algorithm \ref{algModStd} shows the modular standard basis algorithm.\footnote{The 
corresponding procedures are implemented in \singular in the library \texttt{modstd.lib}.}

\begin{algorithm}[h] 
\caption{\textsc{modStd}} \label{algModStd}
Assume that $>$ is either a global or a local monomial ordering.

\begin{algorithmic}
\REQUIRE $I \subseteq \Q[X]$.
\ENSURE $G \subseteq \Q[X]$ the standard basis of $I$.
\vspace{0.1cm}
\STATE choose $P$, a list of random primes;
\STATE $GP = \emptyset$;
\LOOP
\FOR{$p \in P$}
\STATE compute a standard basis $G_p$ of $I_p$;
\STATE $GP = GP \cup \{G_{p}\}$;
\ENDFOR
\STATE $(GP,P) = \textsc{deleteUnluckyPrimesSB}(GP,P)$;
\STATE lift $(GP,P)$ to $G \subseteq \Q[X]$ by applying Chinese remainder and Farey rational 
       map;
\IF{\textsc{pTestSB}$(I,G,P)$}
\IF{$I \subseteq \gen G$}
\IF{$G$ is a standard basis of $\gen G$}
\RETURN $G$;
\ENDIF
\ENDIF
\ENDIF
\STATE enlarge $P$;
\ENDLOOP
\end{algorithmic}
\end{algorithm}

\begin{rem}
The presented version of the algorithm is just pseudo-code whereas its implementation in 
\textsc{Singular} is optimized. E.g., the standard bases $G_p$ of $I_p \subseteq \F_p[X]$ for 
$p \in P$ are not computed repeatedly, but stored and reused in further iteration steps.
\end{rem}

\begin{rem}
Algorithm \ref{algModStd} can easily be parallelized in the following way:
\begin{enumerate}
\item[(1)] Compute the standard bases $G_p$ in parallel. 
\item[(2)] Parallelize the final tests:
           \begin{itemize}
           \item Check if $I \subseteq \gen G$ by checking if $f \in \gen G$ for all
                 $f \in F_I$.
           \item Check if $G$ is a standard basis of $\gen G$ by checking if every
                 $s$--polynomial not excluded by well-known criteria, vanishes by reduction 
                 w.r.t. $G$. 
           \end{itemize}
\end{enumerate}
\end{rem}

Algorithm \ref{algModStd} terminates by construction, and its correctness is guaranteed by 
the following theorem which is proven in \cite{A03} in the case that  $I$ is homogeneous resp. 
in \cite{Pf07} in the case that the ordering is local. The case that the ordering is global follows
by using weighted homogenization as in Theorem 7.5.1 of \cite{GP07}.

\begin{thm} \label{thmModStd}
Let $G \subseteq \Q[X]$ be a set of polynomials such that $\LM(G)=\LM(G_p)$ where $G_p$ is a 
standard basis of $I_p$ for some prime number $p$, $G$ is a standard basis of $\gen G$ and $I 
\subseteq \gen G$. Then $I = \gen G$.
\end{thm}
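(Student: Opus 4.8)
The plan is to deduce the claim from the homogeneous case — Arnold's theorem (\cite{A03}) — by replacing the whole situation with its weighted homogenization, following the pattern of Theorem~7.5.1 of \cite{GP07}. After clearing denominators one may assume that the coefficients of the $f_i$, of the elements of $G$, and of the given standard basis $G_p$ of $I_p$ are $p$-integral; recall also that (as for the primes used in the algorithm) the prime $p$ divides no coefficient occurring in $F_I$. Since $>$ is a global ordering, there is a weight vector $w\in\Z_{>0}^{\,n}$ that is \emph{adapted} to the finite set $F_I\cup G\cup G_p$, meaning that for every $h$ in this set the $w$-leading form of $h$ consists of the single term supported on $\LM(h)$; the existence of such a $w$ is a standard fact about global orderings. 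I would then introduce an extra variable $t$ of weight $1$, write $h^{\,\star}$ for the $w$-homogenization of $h$ with respect to $t$, and equip $\Q[t,X]$ and $\F_p[t,X]$ with the global ordering $>_\star$ refining the induced grading by $>$; by Theorem~7.5.1 of \cite{GP07}, dehomogenization ($t\mapsto1$) carries $>_\star$-standard bases back to $>$-standard bases, and the choice of $w$ forces $\LM(h^{\,\star})=\LM(h)$ for every $h\in F_I\cup G\cup G_p$, with no spurious power of $t$.

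With this in place the reduction runs as follows. Set $\widetilde I:=I^{\,\star}$. Since $G$ is a standard basis of $\gen G$ and $w$ is adapted to $G$, Theorem~7.5.1 of \cite{GP07} gives $\gen G^{\,\star}=\gen{G^{\,\star}}$ with $G^{\,\star}$ a $>_\star$-standard basis of it; likewise $G_p^{\,\star}$ is a $>_\star$-standard basis of $(I_p)^{\,\star}$. From $I\subseteq\gen G$ we get $\widetilde I\subseteq\gen{G^{\,\star}}$, and from adaptedness together with the hypothesis $\LM(G)=\LM(G_p)$ we get $\LM(G^{\,\star})=\LM(G)=\LM(G_p)=\LM(G_p^{\,\star})$, so $G^{\,\star}$ has the same leading monomials as a standard basis of $(I_p)^{\,\star}$. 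Checking that reduction modulo $p$ is compatible with $w$-homogenization — i.e.\ that $(I^{\,\star})_p$ and $(I_p)^{\,\star}$ agree, which uses that $p$ divides none of the coefficients of $F_I$ — then puts exactly the hypotheses of Arnold's homogeneous theorem in force for the homogeneous ideal $\widetilde I$, the set $G^{\,\star}$, the ordering $>_\star$ and the prime $p$. Hence $\widetilde I=\gen{G^{\,\star}}=\gen G^{\,\star}$, and dehomogenizing by $t\mapsto1$ yields $I=\gen G$.

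The hard part is making the leading-monomial hypothesis survive the passage to the homogenization. Plain (unweighted) homogenization is not enough: for a non-degree ordering, $\LM(g^h)$ picks up a factor $t^{\deg g-\deg\LM(g)}$, and the total degrees of elements of $G$ and of the corresponding elements of $G_p$ need not coincide, so the clean equality $\LM(G)=\LM(G_p)$ is destroyed. This is precisely why one must use a \emph{weighted} homogenization with a single $w$ adapted simultaneously to $G$ and $G_p$, so that all these leading forms are honest terms and the equality of leading monomials is preserved. The second delicate point, the commutation of reduction modulo $p$ with the $t$-saturation that cuts $I^{\,\star}$ out of $\gen{f_1^{\,\star},\dots,f_r^{\,\star}}$, is where the assumption that $p$ divides no coefficient of $F_I$ is used: the degree-dropping phenomenon it rules out would otherwise make $(I^{\,\star})_p$ strictly smaller than $(I_p)^{\,\star}$ and the reduction to \cite{A03} would break down. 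Once these two points are settled, the remaining verifications — existence of an adapted $w$, the homogenization/dehomogenization dictionary of Theorem~7.5.1 of \cite{GP07}, and the identity $\LM(h^{\,\star})=\LM(h)$ — are routine.
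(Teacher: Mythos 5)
Your proposal uses the same basic tool as the paper — a weighted homogenization with a weight vector $w$ adapted to the relevant finite set of polynomials, so that $\LM(h^{\star})=\LM(h)$ with no spurious power of $t$ — but it takes a genuinely different route afterwards: you homogenize the \emph{ideal} $I$ to its $t$-saturation $\widetilde I=I^{\star}$ and invoke Arnold's homogeneous theorem as a black box, whereas the paper homogenizes only the \emph{generators}, sets $\overline I:=\gen{f_1^h,\ldots,f_r^h}$ (no saturation), and re-runs the Hilbert-function argument directly for $\overline I$.

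This is where your argument has a real gap. To apply Arnold's theorem to $\widetilde I=I^{\star}$, $G^{\star}$ and the prime $p$ you must verify $\LM(G^{\star})=\LM\bigl(\widetilde I\,\F_p[X,t]\bigr)$, and for that you appeal to ``$(I^{\star})_p=(I_p)^{\star}$''. You flag this as the delicate point, but the justification you offer — ``$p$ divides no coefficient of $F_I$'' — only controls reduction mod $p$ of $\gen{f_1^h,\ldots,f_r^h}$. It does not control reduction of $I^{\star}=\gen{f_1^h,\ldots,f_r^h}:t^{\infty}$, because $t$-saturation does \emph{not} commute with reduction mod $p$: a $\Q$-basis (or Gr\"obner basis) of the saturated ideal typically has coefficients whose numerators or denominators the chosen $p$ may well divide, and when it does, degrees drop, $t$-powers appear, and one only gets $I^{\star}\F_p[X,t]\subseteq (I_p)^{\star}$, possibly strictly. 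The resulting inclusion of leading-monomial ideals goes the wrong way for Arnold's hypothesis, so the argument as written does not close.

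This is precisely what the paper's formulation is engineered to avoid. By never saturating, the paper's $\overline I$ reduces cleanly mod $p$ (because $p$ divides no coefficient of $F_I$ and $w$ is adapted). The price is that $\overline I\subseteq\gen{\overline G}$ is no longer automatic — this is why the paper chooses $w$ adapted not only to $F_I\cup G\cup G_p$ but also to the division coefficients $\xi_{ij}g_j$ arising from $f_i=\sum_j\xi_{ij}g_j$, so that these relations homogenize — and then compares Hilbert functions of $\overline I$, $\overline I_p$ and $\gen{\overline G}$ directly, using semicontinuity $\HF_{\overline I}(n)\le\HF_{\overline I_p}(n)$, rather than invoking Arnold. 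If you want to keep the reduction-to-Arnold structure, the fix is to apply Arnold to the non-saturated $\overline I$ rather than to $I^{\star}$, and to include the $\xi_{ij}g_j$ in the set to which $w$ is adapted; but at that point you are essentially reproducing the paper's computation rather than using Arnold as a black box.
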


Note that the first condition follows from a positive result of \textsc{pTestSB} whereas the 
second and third condition are verified explicitly at the end of the algorithm.

\begin{proof}[Proof of Theorem \ref{thmModStd}]
We assume that $>$ is a global monomial ordering. The proof for a local ordering is similar.
Let $F_I = \{f_1,\ldots,f_r\} \subseteq \Q[X]$ such that $I = \langle F_I \rangle$ and $G = \{g_1,
\ldots,g_s\} \subseteq \Q[X]$. Since $G$ is a standard basis of $\gen G $ w.r.t. $>$ and $I 
\subseteq \gen G$ there exist for each $i = 1,\ldots,r$ polynomials $\xi_{ij} \in \Q[X]$ such that 
$$f_i = \sum_{j=1}^s \xi_{ij} g_j \quad \text{ satisfying }\; \LM_>(f_i) \geq \LM_>(\xi_{ij} g_j) 
\;\text{ for all }\; j = 1,\ldots,s.$$  
Due to Corollary 1.7.9 of \cite{GP07} there exists a finite set $M \subseteq \Mon(X)$ with the
following property: Let $>'$ be any monomial ordering on $\Mon(X)$ coinciding with $>$ on
$M$, then $\LM_>(G) = \LM_{>'}(G)$ and $G$ is also a standard basis of $\gen G$ w.r.t. $>'$. 

Moreover, due to Lemma 1.2.11 resp. Exercise 1.7.17 of \cite{GP07} we possibly enlarge the 
set $M$ and chosse some $w= (w_1,\ldots,w_n) \in \Z_{>0}^n$ such that $> = >_w$ on $M$, 
i.e. $\LM_>(G) = \LM_{>_w}(G)$ resp. $G$ is a standard basis of $\gen G$ w.r.t. $>_w$, 
and\footnote{For a polynomial $f \in \Q[X]$, we define by $\tail(f) := f - \LM(f)$ the \emph{tail} 
of $f$; cf. \cite{GP07}.}
\begin{align*}
\wdeg\left(\LM_{>_w}(f_i)\right) & > \wdeg\left(\LM_{>_w}(\tail(f_i))\right), \\
\wdeg\left(\LM_{>_w}(g_j)\right) & > \wdeg\left(\LM_{>_w}(\tail(g_j))\right), \\
\wdeg\left(\LM_{>_w}(\xi_{ij}g_j)\right) & > \wdeg\left(\LM_{>_w}(\tail(\xi_{ij}g_j))\right), 
\end{align*}
for all $i = 1,\ldots,r$ and $j = 1,\ldots,s$.

Now we consider on $\Q[X,t]$ the weighted degree ordering with weight vector $(w_1,\ldots,
w_n,1)$ refined by $>_w$ on $\Q[X]$ and denote it also by $>_w$. For $f \in \Q[X]$ let $f^h = t^
{\wdeg(f)} \cdot f(x_1/t^{w_1}, \ldots,x_n/t^{w_n})$ be the weighted homogenization of $f$ w.r.t. 
$t$. We set $\overline{F}_I := \set{f_1^h}{f_r^h}$, $\overline I := \gen{\overline F_I}$ and 
$\overline G := \set{g_1^h}{g_s^h}$. Then Proposition 7.5.3 of \cite{GP07} guarantees that
$\overline G \text{ is a standard basis of } \gen{\overline G}$ and since $\LM_{>_w}(G) = 
\LM_{>_w}(G_p)$ it also holds by construction that $\LM_{>_w}(\overline G) = \LM_{>_w}
(\overline G_p)$. Now let $i \in \set 1r$, then $f_i = \sum_{j=1}^s \xi_{ij} g_j$ satisfying 
$\LM_{>_w}(f_i) \geq_w \LM_{>_w}(\xi_{ij} g_j)$ for all $j = 1,\ldots,s$. This implies $\wdeg(f_i)
\geq \wdeg(\xi_{ij}g_j)$ for all $j = 1,\ldots,s$ by the choice of $w \in \Z_{>0}^n$. Consequently
we have 
\begin{align*}
t^{\wdeg(f_i)} & f\left(\frac{x_1}{t^{w_1}},\ldots,\frac{x_n}{t^{w_n}}\right) \\ &= 
\sum_{j=1}^s t^{\wdeg(f_i)} \xi_{ij}\left(\frac{x_1}{t^{w_1}},\ldots,\frac{x_n}{t^{w_n}}\right)
g_j\left(\frac{x_1}{t^{w_1}},\ldots,\frac{x_n}{t^{w_n}}\right) \in \gen{\overline G},
\end{align*}
thus $f_i^h \in \gen{\overline G}$ resp. $\overline I \subseteq \gen{\overline G}$ since $i \in \set 1r$
was arbitrarily chosen.

It remains to prove that $\overline I = \gen{\overline G}$. Let $n \in \N$. We know that $\overline 
I_p = \gen{\overline G_p}$ due to the fact that $\LM_{>_w}(\overline G) = \LM_{>_w}
(\overline G_p)$, so especially it holds $\HF_{\overline I_p}(n) = \HF_{\gen{\overline G_p}}(n)
= \HF_{\gen{\overline G}}(n)$ for the corresponding Hilbert functions. On the other hand we have 
$$\HF_{\overline I}(n) \leq \HF_{\overline I_p} = \HF_{\gen{\overline G}}(n) \leq \HF_{\overline I}(n) 
< \infty,$$ where the second inequality is true since $\overline I \subseteq \gen{\overline G}$. The 
first inequality follows from the fact that $\dim_\Q (\overline I[n]) \geq \dim_{\F_p} 
(\overline I_p[n])$, where $\overline I[n]$ resp. $\overline I_p[n]$ denotes the vector space 
generated by all (weighted) homogeneous polynomials of degree $n$.  Namely we can find a 
$\Q$-basis of $\overline I[n]$ of polynomials in $\Z[X,t] \cap \overline I$ which induces generators 
of $\overline I_p[n]$.
\end{proof}

\begin{rem} \label{remModStdVerification}
Algorithm \ref{algModStd} is also applicable without applying the final tests, i.e. skipping the
verification that $I \subseteq \gen G$ and $G$ is a standard basis of $\gen G$. In this case the 
algorithm is probabilistic, i.e. the output $G$ is a standard basis of the input $I$ only with high 
probability. This usually accelerates the algorithm enormously.  
Note that the probabilistic algorithm works for any ordering, i.e. also for the so--called mixed 
ordering. In case of a mixed ordering one could homogenize the ideal $I$, compute a standard
basis using \textsc{modStd} and dehomogenize afterwards. Experiments showed that this is 
usually not efficient since the standard basis of the homogenized input has often much more 
elements than the standard basis of the ideal we started with.
\end{rem}

\section{A modular approach to primary decomposition} \label{secAssPrimes}

In the following let $I\subseteq \mathbb{Q}[X]$ be a zero--dimensional ideal and $d:= \dim_\Q
(\Q[X]/I)$. Within this section we describe an algorithm for computing the associated primes of
$I$ using modular methods. In conclusion we make remarks how to achieve the corresponding
primary ideals from the associated primes of $I$.

The following well--known proposition (cf. \cite{GTZ88} or \cite{GP07}) describes how to 
compute the associated prime ideals of a radical ideal over $\Q$. Note that these results are 
also valid for perfect infinite fields.

\begin{prop} \label{propCoordinateChange}
Let $I\subseteq \Q[X]$ be a radical ideal.
\begin{enumerate}
\item Let $\gen F = I\cap \Q[x_n]$ and assume $\deg(F)=\dim_\Q(\Q[X]/I)$. Let $F=
          F_1\cdots F_s$ be the factorization of $F$ into irreducible factors over $\Q$. 
          Then $I=\bigcap_{i=1}^s \langle I, F_i\rangle$ and $\langle I, F_i\rangle$ is prime for 
          $i = 1,\ldots,s$.
\item There exists a non--empty Zariski open subset $U\subseteq \Q^{n-1}$ such that for all 
          $a=(a_1, \ldots, a_{n-1})\in U$ the linear coordinate change $\varphi_a$ defined by 
          $\varphi_a(x_i)=x_i$ for $i=1,\ldots,n-1$ and $\varphi_a(x_n)=x_n+\sum_{i=1}^{n-1} a_ix_i$ 
          satisfies $$\dim_\Q \big(\Q[X]/\varphi_a(I)\big)=\dim _\Q \big(\Q[x_n]/(\varphi_a(I)\cap 
          \Q[x_n])\big).$$
\end{enumerate}
\end{prop}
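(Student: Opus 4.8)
The plan is to prove both parts inside the finite-dimensional $\Q$-algebra $A := \Q[X]/I$, which is reduced because $I$ is radical. For (1), observe first that $\Q[x_n]/(I \cap \Q[x_n]) = \Q[x_n]/\gen F$ is (isomorphic to) the subalgebra of $A$ generated by the class of $x_n$; being a subring of a reduced ring it is reduced, so $F$ is squarefree and the $F_i$ are pairwise distinct irreducible polynomials. The hypothesis $\deg(F) = \dim_\Q(A)$ then forces the inclusion $\Q[x_n]/\gen F \hookrightarrow A$ to be an equality, hence $A \cong \Q[T]/\gen{F(T)}$ via $T \mapsto x_n$, and everything reduces to the univariate Chinese remainder theorem $\Q[T]/\gen F \cong \prod_{i=1}^s \Q[T]/\gen{F_i}$, whose factors are fields. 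Under $A \cong \Q[T]/\gen F$ the ideal $\gen{I,F_i}$ corresponds to $\gen{F_i}/\gen F$, with quotient the field $\Q[T]/\gen{F_i}$, so $\gen{I,F_i}$ is maximal, in particular prime; and $\bigcap_{i=1}^s \gen{F_i}/\gen F = \gen{\mathrm{lcm}(F_1,\dots,F_s)}/\gen F = \gen F/\gen F = 0$ since the $F_i$ are pairwise coprime, i.e. $\bigcap_{i=1}^s \gen{I,F_i} = I$ in $\Q[X]$.

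For (2), I would first reformulate the assertion. Since $\varphi_a$ is a $\Q$-algebra automorphism of $\Q[X]$, one has $\dim_\Q\big(\Q[X]/\varphi_a(I)\big) = d$ for every $a$, while $\Q[x_n]/(\varphi_a(I) \cap \Q[x_n])$ has the same $\Q$-dimension as the subalgebra of $\Q[X]/\varphi_a(I)$ generated by the class of $x_n$. Pulling back along $\varphi_a^{-1}$, which sends $x_n$ to $x_n - \sum_{i=1}^{n-1} a_i x_i$, the equality in (2) is therefore equivalent to the statement that $\ell_a := x_n - \sum_{i=1}^{n-1} a_i x_i$ generates $A$ as a $\Q$-algebra, i.e. that $\ell_a$ is a primitive element of $A$ over $\Q$. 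So it suffices to produce a non-empty Zariski open $U \subseteq \A^{n-1}$ with $\ell_a$ primitive for all $a \in U$.

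Write $V(I) = \{P_1,\dots,P_d\} \subseteq \bar\Q^n$; these are $d$ distinct points since $I$ is radical and zero-dimensional (and $\Q$ is perfect, so $A \otimes_\Q \bar\Q$ stays reduced). That base change gives $A \otimes_\Q \bar\Q \cong \bar\Q^d$ via $g \mapsto \big(g(P_1),\dots,g(P_d)\big)$, sending $\ell_a$ to $\big(\ell_a(P_1),\dots,\ell_a(P_d)\big)$; by a Vandermonde argument this tuple generates $\bar\Q^d$ as a $\bar\Q$-algebra exactly when its entries are pairwise distinct, and by faithfully flat descent $\ell_a$ generates $A$ over $\Q$ iff $\ell_a \otimes 1$ generates $\bar\Q^d$. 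Now consider $D(a) := \prod_{k \neq l} \big(\ell_a(P_k) - \ell_a(P_l)\big)$: each factor is a non-zero polynomial in $a_1,\dots,a_{n-1}$ because $P_k \neq P_l$, so $D \neq 0$; and $\mathrm{Gal}(\bar\Q/\Q)$ permutes the $P_k$ and hence the factors, so $D \in \Q[a_1,\dots,a_{n-1}]$. Then $U := \{a \in \A^{n-1} : D(a) \neq 0\}$ is a non-empty Zariski open subset defined over $\Q$, and it has a $\Q$-point since $\Q$ is infinite. For $a \in U(\Q)$ the values $\ell_a(P_1),\dots,\ell_a(P_d)$ are pairwise distinct, so $\ell_a$ is primitive, which proves (2).

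The step I expect to demand the most care is the bookkeeping for (2): checking that being a primitive element of $A$ is the faithful translation of the dimension identity through the coordinate changes $\varphi_a$ and $\varphi_a^{-1}$, and -- more importantly -- that the non-separating locus is cut out by a polynomial with rational coefficients (via the Galois-invariance of $D$), so that $U$ may be chosen over $\Q$ and not merely over $\bar\Q$. Part (1) is by comparison essentially formal once one observes that $\deg F = d$ collapses $A$ onto the monogenic quotient $\Q[T]/\gen F$.
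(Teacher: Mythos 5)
Your proof is correct, and both parts follow the standard route: for (1) the dimension hypothesis collapses $\Q[X]/I$ onto $\Q[T]/\gen{F}$ so that everything reduces to squarefreeness (from reducedness) and the Chinese remainder theorem, and for (2) the equality is translated into the class of the linear form $x_n-\sum_i a_ix_i$ being a primitive element, which over $\bar\Q$ means separating the finitely many points of $V(I)$, with the Galois-invariance of your discriminant $D$ guaranteeing a non-empty open set defined over $\Q$. Note that the paper does not prove this proposition but cites it as well known (from \cite{GTZ88}, \cite{GP07}); your argument is essentially the one found in those references, so there is nothing to flag beyond the harmless observation that the pullback of $x_n$ under $\varphi_a^{-1}$ carries a minus sign, which only reflects the parameter set and does not affect the conclusion.
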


\begin{cor} \label{corPDofRad}
Let $F\in \Q[T]$, $T$ a variable, be squarefree and $r=x_n+\sum_{i=1}^{n-1} a_ix_i$ with
$a_1,\ldots,a_{n-1} \in \Z$ such that $\deg(F)=\dim_\Q(\Q[X]/I)$, and $F(r)\in I$ but no proper factor
of $F(r)$ is in $I$, then $I$ is a radical ideal. 
Let $F=F_1\cdots F_s$ be the factorization of $F$ into irreducible factors over $\Q$. 
Then $I=\bigcap_{i=1}^s \langle I, F_i(r)\rangle$ and $\langle I, F_i(r)\rangle$ is prime 
for $i = 1,\ldots,s$.
\end{cor}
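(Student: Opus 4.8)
The plan is to reduce Corollary~\ref{corPDofRad} to Proposition~\ref{propCoordinateChange} by means of the linear coordinate change $\varphi$ that sends $x_n$ to $r = x_n + \sum_{i=1}^{n-1} a_i x_i$ and fixes $x_1,\ldots,x_{n-1}$. This $\varphi$ is an automorphism of $\Q[X]$ with inverse $\psi$ sending $x_n$ to $x_n - \sum_{i=1}^{n-1} a_i x_i$, so it preserves all ideal-theoretic properties (primality, intersections, radicality, $\Q$-dimension of the quotient). Note that $\varphi$ carries the variable $x_n$ to $r$; hence $\varphi^{-1}(I)\cap\Q[x_n]$ consists exactly of those polynomials $h(x_n)$ with $h(r)\in I$.

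First I would establish that $I$ is radical. Let $\langle G\rangle = \varphi^{-1}(I)\cap\Q[x_n]$, so that $G\in\Q[x_n]$ is the (monic, up to scalar) generator of this principal ideal in the PID $\Q[x_n]$, and $G(r)$ generates $\varphi(\varphi^{-1}(I)\cap\Q[x_n])$, the smallest element of $I$ expressible as a polynomial in $r$. Since $F(r)\in I$, we have that $G\mid F$ in $\Q[x_n]$; since no proper factor of $F(r)$ lies in $I$ and any factor of $G(r)$ in $\Q[r]$ would be such a proper factor unless $G=F$ up to a unit, we conclude $G = F$ up to a nonzero scalar, i.e. $\langle F(r)\rangle = \varphi^{-1}(I)\cap\Q[x_n]$ pulled forward. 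In particular $\deg F = \deg G$, and since $F$ is squarefree so is $G$, hence $\Q[x_n]/\langle G\rangle$ is reduced. Now I would invoke the chain of inequalities $\dim_\Q(\Q[X]/\varphi^{-1}(I)) \ge \dim_\Q(\Q[x_n]/(\varphi^{-1}(I)\cap\Q[x_n])) = \deg G = \deg F = \dim_\Q(\Q[X]/I) = \dim_\Q(\Q[X]/\varphi^{-1}(I))$, forcing equality throughout. Equality of the outer two dimensions means the inclusion $\Q[x_n]/\langle G\rangle \hookrightarrow \Q[X]/\varphi^{-1}(I)$ is an isomorphism, so $\Q[X]/\varphi^{-1}(I)$ is reduced, hence $\varphi^{-1}(I)$ is radical, hence $I$ is radical.

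Having shown $I$ radical, I would apply Proposition~\ref{propCoordinateChange}(1) to the radical ideal $\varphi^{-1}(I)$: since $\langle F\rangle = \varphi^{-1}(I)\cap\Q[x_n]$ with $\deg F = \dim_\Q(\Q[X]/\varphi^{-1}(I))$, and $F = F_1\cdots F_s$ is the factorization into irreducibles over $\Q$, part (1) gives $\varphi^{-1}(I) = \bigcap_{i=1}^s \langle \varphi^{-1}(I), F_i\rangle$ with each $\langle\varphi^{-1}(I), F_i\rangle$ prime. Applying the automorphism $\varphi$ and using that it commutes with finite intersections and preserves primality, and that $\varphi(\langle\varphi^{-1}(I),F_i\rangle) = \langle I, F_i(r)\rangle$, yields $I = \bigcap_{i=1}^s \langle I, F_i(r)\rangle$ with each $\langle I, F_i(r)\rangle$ prime, as claimed.

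The main obstacle is the argument pinning down $G = F$ (up to scalar) and the reducedness transfer: one must argue carefully that "no proper factor of $F(r)$ lies in $I$" translates, under $\varphi^{-1}$, into the statement that the generator of the elimination ideal $\varphi^{-1}(I)\cap\Q[x_n]$ cannot be a proper divisor of $F$, which requires knowing that a factorization $F = AB$ in $\Q[x_n]$ produces the genuine factorization $F(r) = A(r)B(r)$ with $A(r), B(r)$ proper (non-unit) factors in $\Q[X]$ — this uses that $r$ is a nonconstant linear form so composition with $r$ is an injective $\Q$-algebra homomorphism $\Q[x_n]\to\Q[X]$ preserving degrees. The rest is the standard dimension-count bookkeeping already used implicitly in the proof of Proposition~\ref{propCoordinateChange}, together with the elementary fact that a finite reduced $\Q$-algebra remains reduced and that an injection of a reduced algebra onto all of a finite-dimensional algebra of the same dimension is an isomorphism.
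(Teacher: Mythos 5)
Your proof is correct and follows essentially the same strategy as the paper's: reduce to the case $r = x_n$ via the linear coordinate change, use the no-proper-factor hypothesis to identify $\langle F(x_n)\rangle = I \cap \Q[x_n]$, use the dimension equality $\deg F = \dim_\Q(\Q[X]/I)$ to force the inclusion $\Q[x_n]/\langle F(x_n)\rangle \hookrightarrow \Q[X]/I$ to be an isomorphism, deduce radicality from squarefreeness of $F$, and finish by applying Proposition \ref{propCoordinateChange}(1) and transporting back via the automorphism. The only cosmetic difference is that the paper phrases the dimension count by exhibiting the shape basis $\{x_1 - h_1(x_n),\ldots,x_{n-1}-h_{n-1}(x_n), F(x_n)\}$, whereas you state the equivalent fact directly as an isomorphism of quotient algebras.
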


\begin{proof}
Using a linear change of variables we may assume that $r = x_n$. Since no proper factor of $F(r)$
is in $I$ we obtain $\langle F(x_n) \rangle = I \cap \Q[x_n]$. Since $\deg(F) = \dim_\Q(\Q[X]/I)$ we 
have $I = \gen{x_1-h_1(x_n), \ldots, x_{n-1}-h_{n-1}(x_n),F(x_n)}$ for suitable $h_1,\ldots,h_{n-1} 
\in \Q[x_n]$. Thus, $I$ is radical because $F$ is squarefree. The rest is an immediate consequence
of Proposition \ref{propCoordinateChange}(1).
\end{proof}

Consequently, for the computation of the primary decomposition, we firstly verify whether $I$ is 
already radical. Therefore we choose a generic linear form $r = a_1x_1+\ldots+a_{n-1}x_{n-1}+x_n$ 
with $a_1,\ldots,a_{n-1} \in \Z$, and use a test in positive characteristic, similarly to section 
\ref{secModStd}.

\vspace{0.2cm}

\emph{\textsc{pTestRad:} We randomly choose a prime number $p$ such that $\dim_{\F_p}
(\F_p[X]/I_p) = d$. Let $\varphi:\F_p[T] \longrightarrow \F_p[X]$ be defined by $\varphi(T) = r 
\mod p$ (cf. Lemma \ref{lemF}(1)) and $\gen{F_p} := \varphi^{-1}(I_p)$. We test if $\deg(F_p) 
= d$.} 

\vspace{0.2cm}

In case of a negative result of the test there is a high probability that the ideal is not radical (cf. 
Proposition \ref{propCoordinateChange}(2)) and we compute the radical using modular methods. 
The computation of the radical is usually much more time consuming than the \textsc{pTestRad} 
even if the ideal is already radical. 
The following proposition (cf. \cite{KrLo91}, \cite{GP07}) is the basis for computing the 
radical of a zero--dimensional ideal.

\begin{prop} \label{propRad}
Let $I \subseteq \Q[X]$ be a zero--dimensional ideal and $\gen{f_i} = I \cap \Q[x_i]$ for $i = 
1,\ldots,n$. Moreover, let $g_i$ be the squarefree part of $f_i$. Then the following holds.
\begin{enumerate}
\item $\sqrt I = I + \gen{g_1,\ldots,g_n}$. 
\item If $\deg(f_n) = \dim_\Q (\Q[X]/I)$ then $\sqrt I = \gen{I,g_n}$.
\end{enumerate}
\end{prop}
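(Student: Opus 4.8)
\textbf{Proof plan for Proposition \ref{propRad}.}
The plan is to reduce everything to a single variable by using the structure theorem for zero--dimensional ideals, and to prove part (1) first, with part (2) following easily. First I would observe that $\sqrt I \supseteq I + \gen{g_1,\ldots,g_n}$ is essentially trivial: each $g_i$ is the squarefree part of $f_i$, so some power of $g_i$ is a multiple of $f_i$ in $\Q[x_i]$, hence $g_i^{N} \in \gen{f_i} \subseteq I$ for large $N$, which means $g_i \in \sqrt I$. The real content is the reverse inclusion $\sqrt I \subseteq I + \gen{g_1,\ldots,g_n}$, equivalently that $J := I + \gen{g_1,\ldots,g_n}$ is already radical. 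Since $J \supseteq I$ is still zero--dimensional, it suffices to show $\Q[X]/J$ is reduced.

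To prove $J$ is radical I would argue localizing at the (finitely many) maximal ideals containing $J$, or equivalently decompose $\Q[X]/J$ as a product of local Artinian rings; being reduced is checked factor by factor. Over the algebraic closure $\overline{\Q}$, each maximal ideal containing $I$ has the form $\gen{x_1 - \alpha_1,\ldots,x_n - \alpha_n}$ where $\alpha_i$ is a root of $f_i$ (because $f_i \in I \cap \Q[x_i]$ forces the $i$-th coordinate of any point of $V(I)$ to be a root of $f_i$). Since $g_i$ is the squarefree part of $f_i$, it has the same roots as $f_i$ but each with multiplicity one, so $g_i'(\alpha_i) \neq 0$; that is, $x_i - \alpha_i$ appears to the first power in the local factor of $\gen{g_i}$ at that point. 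Adjoining the $g_i$ to $I$ therefore cuts the local ring down to the field $\overline{\Q}$ at every point, so $\overline{\Q} \otimes_{\Q} (\Q[X]/J)$ is reduced, hence so is $\Q[X]/J$. An alternative, more hands--on route avoiding the base change: show directly that the image of each $x_i$ in $\Q[X]/J$ is separable over $\Q$ (it satisfies the squarefree polynomial $g_i$), so $\Q[X]/J$ is generated by separable elements over the perfect field $\Q$ and is thus a finite product of separable field extensions, in particular reduced.

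For part (2), the hypothesis $\deg(f_n) = \dim_\Q(\Q[X]/I)$ forces $\Q[X]/I \cong \Q[x_n]/\gen{f_n}$ via the inclusion $\Q[x_n] \hookrightarrow \Q[X]$; concretely $1, x_n, \ldots, x_n^{d-1}$ is a $\Q$-basis, so each $x_i$ is congruent mod $I$ to a polynomial in $x_n$, i.e. $I = \gen{x_1 - h_1(x_n),\ldots,x_{n-1}-h_{n-1}(x_n), f_n(x_n)}$ for suitable $h_i \in \Q[x_n]$. Then $\Q[X]/\gen{I,g_n} \cong \Q[x_n]/\gen{g_n}$ is reduced because $g_n$ is squarefree, and it contains $\Q[X]/\sqrt I$ as it is a reduced quotient of $\Q[X]/I$, so $\gen{I,g_n} \supseteq \sqrt I$; the reverse inclusion $\gen{I,g_n} \subseteq \sqrt I$ is again the trivial direction since $g_n \in \sqrt I$.

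The step I expect to be the main obstacle is the reverse inclusion in part (1) --- establishing that $I + \gen{g_1,\ldots,g_n}$ is genuinely radical rather than merely contained in $\sqrt I$. The base-change-to-$\overline{\Q}$ bookkeeping (checking that every maximal ideal of $\Q[X]/I$ really is cut out in the expected coordinate form, and that adjoining the $g_i$ kills all nilpotents simultaneously rather than one coordinate at a time) is where care is needed; the separability argument is cleaner but still requires knowing that a finite $\Q$-algebra generated by separable elements is reduced, which one should cite from the standard references \cite{GP07}, \cite{KrLo91}.
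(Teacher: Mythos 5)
Your proof is correct. For part~(2) it is essentially the paper's argument: the degree hypothesis forces the shape--lemma form $I = \gen{x_1 - h_1(x_n),\ldots,x_{n-1}-h_{n-1}(x_n), f_n}$, and replacing $f_n$ by its squarefree part $g_n$ gives a radical ideal $\gen{I,g_n}$ squeezed between $I$ and $\sqrt I$. The genuine difference is part~(1): the paper does not prove it at all, it simply cites \cite{KrLo91}, whereas you sketch an actual argument, indeed two. Of your two routes the separability one is the cleaner: $\Q[X]/\big(I + \gen{g_1,\ldots,g_n}\big)$ is a quotient of $\Q[x_1]/\gen{g_1} \otimes_\Q \cdots \otimes_\Q \Q[x_n]/\gen{g_n}$, and since each $g_i$ is squarefree over the perfect field $\Q$ each tensor factor is \'etale, hence the tensor product and any quotient of it is a finite product of fields, in particular reduced --- this avoids base change to $\overline{\Q}$ and the point--by--point local analysis entirely. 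The one place where your write--up glosses is the transition from ``each generator is separable'' to ``the whole algebra is reduced''; the precise step is the passage through the tensor product and the fact that a quotient of an \'etale $\Q$-algebra is \'etale, which deserves to be said explicitly (or cited). Finally, a small wording slip in part~(2): you say $\Q[X]/\gen{I,g_n}$ ``contains'' $\Q[X]/\sqrt I$, but the map goes the other way --- $\Q[X]/\gen{I,g_n}$ is a \emph{quotient} of $\Q[X]/\sqrt I$; the inclusion $\sqrt I \subseteq \gen{I,g_n}$ that you conclude is nonetheless exactly right, since $\gen{I,g_n}$ is a radical ideal containing $I$.
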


\begin{proof}
Part (1) of the proposition is proved in \cite{KrLo91}. For part (2) we notice that if it holds $\deg(f_n) 
= \dim_\Q(\Q[X]/I)$ then there exist $h_1,\ldots,h_{n-1} \in \Q[x_n]$ such that $\{x_1-h_1,\ldots,
x_{n-1}-h_{n-1}, f_n\}$ is a Gr\"obner basis of $I$ w.r.t. the lexicographical ordering $x_1 > \ldots > 
x_n$. Thus, we have $\sqrt I = \gen{x_1-h_1, \ldots,x_{n-1}-h_{n-1},g_n}$.
\end{proof}

With analogous considerations as in section \ref{secModStd}, the essential idea of the algorithm to
compute the radical of $I$ is as follows. Choose a set $P$ of prime numbers, compute, for every 
$p \in P$, monic polynomials $f_1^{(p)},\ldots,f_n^{(p)}$ satisfying $\langle f_i^{(p)} \rangle = I_p \cap 
\F_p[x_i]$ for $i = 1,\ldots,n$ and finally lift these polynomials via Chinese remainder algorithm and
Farey rational map to $(f_1,\ldots,f_n) \in \Q[x_1] \times \ldots \times \Q[x_n]$.

\begin{defn}
Let  $(f_1,\ldots,f_n) \in \Q[x_1] \times \ldots \times \Q[x_n]$ satisfy $\gen{f_i} = I \cap \Q[x_i]$ for
$i = 1,\ldots,n$.\footnote{By abuse of notation we use the same terminology as in Definition 
\ref{defnLucky} since it is always clear out of context which definition we are referring to.}
\begin{enumerate}
\item If $(f_1^{(p)},\ldots,f_n^{(p)}) \in \F_p[x_1] \times \ldots \times \F_p[x_n]$ satisfies $\langle f_i^{(p)}
          \rangle = I_p \cap \F_p[x_i]$ for $i = 1,\ldots,n$, then the prime number $p$ is called \emph{lucky 
          for $I$} if and only if $\deg(f_i) = \deg(f_i^{(p)})$ for $i = 1,\ldots,n$. Otherwise $p$ is called 
          \emph{unlucky for $I$}.
\item A set $P$ of lucky primes for $I$ is called \emph{sufficiently large for $I$} if and only if 
          $\prod_{p \in P} p \geq \max\{2 \cdot |c|^2 \mid c \,\text{ coefficient occuring in }\, f_1,\ldots,f_n\}$. 
\end{enumerate}
\end{defn}

After having computed the set $FP:=\{(f_1^{(p)},\ldots,f_n^{(p)}) \mid p \in P\}$ we delete the unlucky 
primes in the following way.

\vspace{0.2cm}

\emph{\textsc{deleteUnluckyPrimesRad:} We define an equivalence relation on 
$(FP,P)$ by \linebreak $(F^{(p)},p) \sim (F^{(q)},q) :\Longleftrightarrow \deg(f_i^{(p)}) = \deg(f_i^{(q)})$
for $i=1,\ldots,n$. Then the equivalence class of largest cardinality is stored in $(FP,P)$, the others 
are deleted.} 

\vspace{0.2cm}

With the aid of this method we are able to choose a set of lucky primes with high probability. 
A faulty decision will be compensated by the subsequent test whether $f_i \in I$ for $i = 1,\ldots,n$.

Since we cannot predict if a given set of primes $P$ is sufficiently large for $I$, we have to proceed
by trial and error as already described in section \ref{secModStd}.

Algorithm \ref{algZeroRadical} computes the radical of $I$.\footnote{The 
corresponding procedure is implemented in \singular in the library \texttt{assprimeszerodim.lib}.}

\begin{algorithm}[h]
\caption{\textsc{zeroRadical}} \label{algZeroRadical}
\begin{algorithmic}
\REQUIRE $I = \gen{G_I} \subseteq \Q[X]$ a zero--dimensional ideal generated by a Gr\"obner basis
                     $G_I$ w.r.t. some global ordering.
\ENSURE $G \subseteq \Q[X]$ a Gr\"obner basis of the radical of $I$ w.r.t. a degree--ordering.
\vspace{0.1cm}
\STATE choose $P$, a list of random primes;
\STATE $FP = \emptyset$;
\LOOP
\FOR{$p \in P$}
\STATE compute monic polynomials $f_i^{(p)}$ such that $\gen{f_i^{(p)}} = I_p \cap \F_p[x_i]$ for $i=1,\ldots,n$;
\STATE $FP = FP \cup \{ (f_1^{(p)},\ldots,f_n^{(p)}) \}$;
\ENDFOR
\STATE $(FP,P) = \textsc{deleteUnluckyPrimesRad}(FP,P)$;
\STATE lift $(FP,P)$ to $(f_1,\ldots,f_n) \in \Q[x_1] \times \ldots \times \Q[x_n]$ by applying Chinese remainder 
               and Farey rational map;
\STATE use $G_I$ to test if $f_i \in I$ for $i=1,\ldots,n$;
\IF{$f_i \in I$ for all $i = 1,\ldots,n$}
\STATE exit loop;
\ENDIF
\STATE enlarge $P$;
\ENDLOOP
\FOR{$i = 1,\ldots,n$}
\STATE compute $g_i$, the squarefree part of $f_i$;
\ENDFOR
\STATE $I = I + \gen{g_1,\ldots,g_n}$;
\STATE compute $G\subseteq \Z[X]$, a $\Q[X]$--Gr\"obner basis of $I$ w.r.t. a degree--ordering;\footnotemark
\RETURN $G$;
\end{algorithmic}
\end{algorithm}

If the \textsc{pTestRad} is positive then, with high probability, after a generic coordinate change it holds 
$\dim_\Q(\Q[x_n]/(I \cap \Q[x_n])) = d$. In this case it is not necessary to compute the radical of $I$ and we
rely on the following corollary.

\pagebreak
\footnotetext[12]{Here we use the procedure \textsc{modStd} as described in section \ref{secModStd}.}

\begin{cor} \label{corRad}
Let $I \subseteq \Q[X]$ be a zero--dimensional ideal and $r = x_n + \sum_{i=1}^{n-1} a_ix_i$
with $a_1,\ldots,a_{n-1} \in \Z$. Let $F \in \Q[T]$, $T$ be a variable, such that $\deg(F) = \dim_\Q(\Q[X]/I)$ 
and $F(r) \in I$ but no proper factor of $F(r)$ is in $I$. Moreover, let $H$ be the squarefree part of $F$. 
Then $\sqrt I = \gen{I,H(r)}$.
\end{cor}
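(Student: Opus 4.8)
The plan is to reduce Corollary~\ref{corRad} to Corollary~\ref{corPDofRad} by a linear change of coordinates that turns $r$ into $x_n$, and then to combine the two parts of Proposition~\ref{propRad}. First I would apply the linear automorphism of $\Q[X]$ sending $x_n \mapsto x_n - \sum_{i=1}^{n-1} a_i x_i$ (so that $r$ is mapped to $x_n$) and replace $I$ by its image under this map; this preserves $\dim_\Q(\Q[X]/I)$, preserves the property ``$F(r) \in I$ but no proper factor of $F(r)$ lies in $I$'', and preserves $\sqrt{I}$ and the formation of $\gen{I,H(r)}$, so without loss of generality $r = x_n$. In this situation the hypothesis that $F(x_n) \in I$ while no proper factor of $F(x_n)$ is in $I$ forces $\gen{F(x_n)} = I \cap \Q[x_n]$: indeed $I \cap \Q[x_n]$ is a principal ideal, say $\gen{f_n}$ with $f_n \mid F$, and if $f_n$ were a proper factor of $F$ it would contradict minimality, so $f_n$ and $F$ agree up to a unit.

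Next I would invoke the structure result already used in the proof of Corollary~\ref{corPDofRad} and of Proposition~\ref{propRad}(2): since $\deg(F) = \dim_\Q(\Q[X]/I)$ and $\gen{F(x_n)} = I \cap \Q[x_n]$, the quotient $\Q[X]/I$ is generated as a $\Q[x_n]$-module by $1$, hence there are $h_1,\ldots,h_{n-1} \in \Q[x_n]$ with $I = \gen{x_1 - h_1(x_n),\ldots,x_{n-1}-h_{n-1}(x_n),F(x_n)}$, and $\{x_1-h_1,\ldots,x_{n-1}-h_{n-1},F(x_n)\}$ is a Gröbner basis of $I$ w.r.t. the lexicographical ordering with $x_1 > \cdots > x_n$. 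In particular the projection $f_n$ of $I$ onto $\Q[x_n]$ is exactly $F$ (up to scalar), so the squarefree part of $f_n$ is $H$. Applying Proposition~\ref{propRad}(2) now yields $\sqrt{I} = \gen{I, H(x_n)}$, and undoing the coordinate change gives $\sqrt{I} = \gen{I, H(r)}$ in the original coordinates.

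The only point needing a little care — the ``main obstacle'', though it is minor — is the passage from ``no proper factor of $F(r)$ lies in $I$'' to $\gen{F(x_n)} = I \cap \Q[x_n]$ after the reduction to $r = x_n$; one must check that a \emph{proper divisor} of $F$ in $\Q[x_n]$ is the same thing as a \emph{proper factor} of $F(r)$ in the sense used in the statement, and that the generator $f_n$ of the principal ideal $I \cap \Q[x_n]$ is genuinely a divisor of $F$ (which follows because $F \in I \cap \Q[x_n] = \gen{f_n}$). One should also note that $F$ itself need not be assumed squarefree here (unlike in Corollary~\ref{corPDofRad}), which is why we pass to its squarefree part $H$ and appeal to Proposition~\ref{propRad}(2) rather than concluding that $I$ is already radical. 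Everything else is a direct citation of the results proved above.
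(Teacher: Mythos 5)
Your proof is correct and follows essentially the same route the paper intends: reduce to $r = x_n$ by a linear change of coordinates, deduce $\gen{F(x_n)} = I \cap \Q[x_n]$ as in the proof of Corollary~\ref{corPDofRad}, identify $F$ (up to scalar) with the generator $f_n$ of $I \cap \Q[x_n]$, and apply Proposition~\ref{propRad}(2). The paper's proof is a one-line citation of those two results; your write-up simply spells out the details that citation is relying on.
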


\begin{proof}
The proof is a consequence of Proposition \ref{propRad}(2) and Corollary \ref{corPDofRad}.
\end{proof}

Consequently we need to obtain a polynomial $F \in \Q[T]$ satisfying the required properties of 
Corollary \ref{corPDofRad} resp. Corollary \ref{corRad}. The following lemma is helpful in this
direction.

\begin{lem} \label{lemF}
Let $K$ be a field\footnote{We substitute $\Q$ by an arbitrary field $K$ since we also need the results 
of Lemma \ref{lemF} for finite fields.}, $F\in K[T]$, $T$ a variable, be monic and squarefree, let $r=x_n+
\sum_{i=1}^{n-1} a_ix_i$, $a_1,\ldots,a_{n-1} \in K$, such that $\deg(F)=\dim_K (K[X]/I)$ and $F(r)
\in I$ but no proper factor of $F(r)$ is in $I$.
\begin{enumerate}
\item Let $\varphi: K[T]\to K[X]$ be defined by $\varphi(T)=r$. Then $\varphi^{-1}(I)=\langle 
          F\rangle$.
\item Let $\psi:K[X]\to K[X]$ be defined by $\psi(x_i)=x_i$ for $i=1,\ldots,n-1$ and $\psi(x_n)=2x_n-r$. 
          Then $\psi (I)\cap K[x_n]=\langle F(x_n)\rangle$.
\item Let $\lambda:K[X]/I\to K[X]/I$ be the map defined by the multiplication with $r$, 
          $\lambda(g+I)=r\cdot g+I$. Then $F$ is the characteristic polynomial of $\lambda$.
\end{enumerate}
\end{lem}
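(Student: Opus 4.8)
The plan is to prove the three statements essentially in the order given, since (2) and (3) can be derived from the structural description of $I$ that already underlies (1). First I would normalize the situation by a linear change of coordinates: the map sending $x_n \mapsto x_n - \sum_{i=1}^{n-1} a_i x_i$ and fixing the other variables is a $K$-algebra automorphism of $K[X]$ carrying $r$ to $x_n$, and it preserves $\dim_K(K[X]/I)$, membership in $I$, and the property ``no proper factor of $F(r)$ lies in $I$''. So without loss of generality $r = x_n$. Now the key observation is that $F(x_n) \in I$ forces $\langle F(x_n)\rangle = I \cap K[x_n]$: the inclusion $\subseteq$ is clear, and if some $h \in I \cap K[x_n]$ were not a multiple of $F$, then $\gcd(F,h)$ would be a proper factor of $F$ lying in $I \cap K[x_n]$ (using that $K[x_n]$ is a PID and $F$ is squarefree), contradicting the hypothesis. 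Since $\deg(F) = \dim_K(K[X]/I)$, the residues $1, x_n, \ldots, x_n^{\deg(F)-1}$ are $K$-linearly independent in $K[X]/I$ and hence a basis; in particular each $x_i$ with $i < n$ is congruent mod $I$ to a unique polynomial $h_i(x_n)$ of degree $< \deg(F)$, and $I = \langle x_1 - h_1(x_n), \ldots, x_{n-1} - h_{n-1}(x_n), F(x_n)\rangle$.

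For part (1), with $r = x_n$ the map $\varphi \colon K[T] \to K[X]$, $\varphi(T) = x_n$, is an isomorphism onto $K[x_n]$, so $\varphi^{-1}(I) = \varphi^{-1}(I \cap K[x_n]) = \varphi^{-1}(\langle F(x_n)\rangle) = \langle F\rangle$, as $\varphi(F) = F(x_n)$. (In the original coordinates this reads $\varphi^{-1}(I) = \langle F\rangle$ with $\varphi(T) = r$.) For part (2), note $\psi$ is the composite of the automorphism $x_n \mapsto 2x_n$ (a scaling, well defined since $\mathrm{char}\,K \neq 2$ is not actually needed here because $2$ is invertible over a field only if $\mathrm{char}\,K\neq 2$ — I would instead observe $\psi(x_n) = 2x_n - r$ sends $r = x_n$ to $x_n$ in the normalized coordinates, so $\psi$ becomes the identity on $K[x_n]$ after the reduction, reducing (2) to the identity $I \cap K[x_n] = \langle F(x_n)\rangle$ established above). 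More carefully: applying the normalizing automorphism and then $\psi$ amounts, on $K[x_n]$, to a $K$-linear substitution of $x_n$ fixing the ideal $\langle F(x_n)\rangle$ up to the corresponding substitution in $F$; since $F$ is monic this only rescales $F$ by a unit, giving $\psi(I) \cap K[x_n] = \langle F(x_n)\rangle$.

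For part (3), I would argue in the normalized coordinates where multiplication by $r$ is multiplication by $x_n$ on the quotient $A := K[X]/I$, a $K$-vector space of dimension $\deg(F) =: m$ with basis $1, x_n, \ldots, x_n^{m-1}$. In this basis the endomorphism $\lambda$ is the companion matrix of the monic polynomial $F$, because $x_n \cdot x_n^{m-1} = x_n^m \equiv -(\text{lower-degree terms of } F)(x_n) \bmod I$ by $F(x_n) \in I$, while $x_n \cdot x_n^{k} = x_n^{k+1}$ for $k < m-1$; the characteristic polynomial of a companion matrix of $F$ is $F$ itself. Finally I would check that both the characteristic polynomial and $F$ are invariant under the coordinate normalization — the characteristic polynomial of multiplication-by-$r$ is manifestly independent of the chosen coordinates, and $F$ was fixed from the start — so the identity holds in the original coordinates as well.

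The only place requiring care, and the main obstacle, is part (2): one must track exactly what the substitution $x_n \mapsto 2x_n - r$ does to the generator $F(x_n)$ of $I \cap K[x_n]$ after the normalization, and verify that monicity of $F$ guarantees the image ideal is still generated by $F(x_n)$ (rather than by $F$ evaluated at a linear function of $x_n$, which would in general be a different polynomial). The clean way to see this is that, in normalized coordinates, $\psi$ restricted to $K[x_n]$ is literally the identity, so no substitution occurs at all; the apparent complication of the factor $2$ disappears once one observes $2x_n - r = 2x_n - x_n = x_n$ there. Everything else is routine linear algebra over a PID.
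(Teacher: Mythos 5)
Parts (1) and (3) of your proof are sound and essentially match the paper's argument. For (3) the paper also normalizes to $r = x_n$ (via the map $\psi$), writes $I = \langle x_1-h_1,\ldots,x_{n-1}-h_{n-1},F(x_n)\rangle$, identifies $K[X]/I$ with $K[x_n]/\langle F(x_n)\rangle$, and reads off $F$ as the characteristic polynomial through the companion-matrix picture, just as you do. For (1) your normalization is unnecessary overhead --- the paper directly observes $\varphi(F) = F(r) \in I$, so $F \in \varphi^{-1}(I)$, and if a proper divisor $G$ of $F$ lay in $\varphi^{-1}(I)$ then $G(r)$ would be a proper factor of $F(r)$ in $I$ --- but your version is also correct.

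The genuine gap is in part (2). Reducing to normalized coordinates does not, on its own, transport the claim $\psi(I) \cap K[x_n] = \langle F(x_n)\rangle$ into an equivalent one, because the subring $K[x_n]$ is not stable under your normalizing automorphism $\sigma$ (which sends $x_n \mapsto x_n - \sum_{i<n} a_i x_i \notin K[x_n]$); this is precisely the difference between (2) and the coordinate-invariant statements (1) and (3). Your ``more carefully'' sentence is also incorrect: $\psi\circ\sigma$ does not restrict to a $K$-linear substitution of $K[x_n]$ into itself, and even a genuine linear substitution $x_n \mapsto \alpha x_n + \beta$ does not send $\langle F(x_n)\rangle$ to itself up to a unit, it replaces $F(x_n)$ by the different polynomial $F(\alpha x_n+\beta)$. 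What actually closes the argument --- and what you circle around without stating --- is that $\psi$ \emph{is} your normalizing map $\sigma$: expanding $\psi(x_n) = 2x_n - r = 2x_n - \bigl(x_n + \sum_{i<n} a_i x_i\bigr) = x_n - \sum_{i<n} a_i x_i$ gives $\psi = \sigma$, so $\psi(I)\cap K[x_n] = \sigma(I)\cap K[x_n]$, which your opening paragraph already showed equals $\langle F(x_n)\rangle$. The paper avoids the whole detour: since $\psi(r) = x_n$ one has $\psi(F(r)) = F(x_n) \in \psi(I)$; because $\psi$ is an automorphism with $\psi^{-1}(x_n) = r$, a proper factor of $F(x_n)$ lying in $\psi(I)$ would pull back to a proper factor of $F(r)$ lying in $I$, contradicting the hypothesis, and then $\psi(I)\cap K[x_n] = \langle F(x_n)\rangle$ because $K[x_n]$ is a PID.
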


\begin{proof}
\begin{enumerate}
\item Since $\varphi(F) = F(r) \in I$ we obtain $F \in \varphi^{-1}(I)$. Thus we have $\gen F = \varphi^{-1}(I)$
          because no proper factor of $F(r)$ is in $I$.
\item It holds $F(x_n) = \psi(F(r)) \in \psi(I)$ by definition of $\psi$. The assumption implies that no proper 
          factor of $F(x_n)$ is in $\psi(I)$, i.e. $\gen{F(x_n)} = \psi(I) \cap K[x_n]$.
\item Using the map $\psi$ of (2) we may assume $r = x_n$. As in the proof of Corollary \ref{corPDofRad}
          we obtain $I = \gen{x_1-h_1,\ldots,x_{n-1}-h_{n-1}, F(x_n)}$ for suitable $h_1,\ldots,h_{n-1} \in 
          K[x_n]$ since $\deg(F) = \dim_K(K[X]/I) = d$. Hence, we may choose $\{1,x_n,\ldots,x_n^{d-1}\}$ as 
          a basis of $K[X]/I \cong K[x_n]/\gen{F(x_n)}$, and obtain the polynomial $F$ to be the characteristic 
          polynomial of the multiplication with $x_n$.
\end{enumerate}
\end{proof}

Lemma \ref{lemF} shows that the approach of Eisenbud, Hunecke, Vasconcelos (cf. 
\cite{EHV92}) using (1) of the lemma, the approach of Gianni, Trager, Zacharias (cf. 
\cite{GTZ88}) using (2) of the lemma and the approach of Monico (cf. \cite{M02}) using (3) 
of the remark are in principle the same. The computations for (1) resp. (2) require Gr\"obner 
bases with respect to suitable block--orderings whereas in (3) we do not need a special 
ordering for the Gr\"obner basis but we have to compute a determinant. All three algorithms 
are implemented in \textsc{Singular}. 

\begin{rem}
We can also compute the polynomial $F \in \Q[T]$ using modular methods. For this purpose
we compute $F^{(p)} \in \F_p[T]$ monic such that $\gen{F^{(p)}} = \ker(\varphi_p)$, whereat
$\varphi_p: \F_p[T] \longrightarrow \F_p[X]/I_p, \, \varphi_p(T) = r \mod I_p$, for several prime
numbers $p$ and preserve just those $F^{(p)}$ with $\deg(F^{(p)}) = d$. Afterwards we lift the
results to $F \in \Q[T]$ by applying Chinese remainder and Farey rational map. 
\end{rem}

\begin{rem}
If $K = \C$ is the field of complex numbers we can use the polynomial $F$ of Corollary 
\ref{corPDofRad} to compute the zeros of the ideal $I$. The zeros of $F$ are the eigenvalues 
of the multiplication map $\lambda$ defined in Lemma \ref{lemF}. Let $\lambda_1,\ldots,\lambda_d$ 
be the (different) eigenvalues of $\lambda$ then $I = \bigcap_{i=1}^d \gen{I,r-\lambda_i}$. 
Moreover, $\gen{I,r-\lambda_i}$ is a maximal ideal in $\C[X]$ representing a zero of $I$ for $i = 1,
\ldots,d$.
\end{rem}

Referring to Proposition \ref{propCoordinateChange}, Corollary \ref{corPDofRad} and the above
considerations, Algorithm \ref{algAssPrimes} computes the associated primes of $I$.\footnote{The 
corresponding procedures are implemented in \singular in the library \texttt{assprimeszerodim.lib}.}

\begin{algorithm}[h] 
\caption{\textsc{assPrimes}} \label{algAssPrimes}
\begin{algorithmic}
\REQUIRE $I\subseteq \Q[X]$ a zero--dimensional ideal.
\ENSURE  $L=\{M_1, \ldots, M_s\}$, $M_i$ prime and $\sqrt I=\bigcap_{i=1}^s M_i$.
\vspace{0.1cm}
\STATE compute $G\subseteq \Z[X]$, a $\Q[X]$--Gr\"obner basis of $I$ w.r.t. a 
               degree--ordering;\footnotemark
\STATE compute $d=\dim_{{\mathbb Q}}({\mathbb Q}[X]/I)$ using $G$;
\STATE choose $a_1,\ldots,a_{n-1} \in \Z$ randomly, $r=a_1x_1+\ldots +a_{n-1}x_{n-1}+x_n$;
\IF{not \textsc{pTestRad}$(d,r,G)$}
\STATE $G = \textsc{zeroRadical}(G)$;
\STATE $d = \dim_\Q(\Q[X]/\gen G)$;
\ENDIF
\STATE choose $P$, a list of random primes;
\STATE $FP = \emptyset$;
\STATE $l = 0$;
\LOOP
\FOR{$p \in P$}
\STATE compute $F^{(p)} \in \F_p[T]$ monic such that $\gen{F^{(p)}} = \ker(\varphi_p)$, whereat
               $\varphi_p: \F_p[T] \longrightarrow \F_p[X]/I_p, \, \varphi_p(T) = r \mod I_p$;\footnotemark
\IF{$\deg(F^{(p)}) = d$}
\STATE $FP = FP \cup \{ F^{(p)} \}$;
\ENDIF
\ENDFOR
\IF{$\#(FP)=l$}
\STATE $G = \textsc{zeroRadical}(G)$;
\STATE $d = \dim_\Q(\Q[X]/\gen G)$;
\STATE choose $a_1,\ldots,a_{n-1} \in \Z$ randomly, $r=a_1x_1+\ldots +a_{n-1}x_{n-1}+x_n$;
\ELSE
\STATE lift $(FP,P)$ to $F \in \Q[T]$ by applying Chinese remainder and Farey rational map;
\STATE factorize $F=F_1^{\nu_1}\cdots F_s^{\nu_s}$ with $F_1,\ldots,F_s$ irreducible;
\STATE compute $F(r)$ and $F_1(r),\ldots,F_s(r)$;
\IF{$F(r)\in I$}
\IF{no proper factor of $F(r)$ is in $I$}
\RETURN $\{\langle I, F_1(r)\rangle, \ldots, \langle I, F_s(r)\rangle\}$;
\ELSE
\STATE choose a non--trivial factor $H$ of $F$ of minimal degree such that $H(r) \in I$;
\STATE let $F_{i_1},\ldots,F_{i_t}$ correspond to $H$;
\RETURN $\textsc{assPrimes}(\gen{I,F_{i_1}(r)}) \cup \ldots \cup \textsc{assPrimes}(\gen{I,F_{i_t}(r)})$;
\ENDIF
\ENDIF
\STATE enlarge $P$;
\STATE $l = \#(FP)$;
\ENDIF
\ENDLOOP
\end{algorithmic}
\end{algorithm}

\begin{rem}
The presented versions of Algorithms \ref{algZeroRadical} and \ref{algAssPrimes} are just pseudo-code 
whereas their implementation in \textsc{Singular} is optimized. E.g., the polynomials $f_i^{(p)} \in \F_p[x_i]$ 
resp. $F^{(p)} \in \F_p[T]$  for $p \in P$ are not computed repeatedly, but stored and reused in further 
iteration steps.
\end{rem}

\begin{rem} \label{remAssPrimes}
Algorithm \ref{algZeroRadical} resp. Algorithm \ref{algAssPrimes} can easily be parallelized by computing 
the polynomials $f_i^{(p)} \in \F_p[x_i]$ resp. $F^{(p)} \in \F_p[T]$ in parallel. Experiments indicate that the 
difficult and time consuming part of Algorithm \ref{algAssPrimes} is the test whether $F(r)\in I$ and the 
computation of $F_1(r),\ldots,F_s(r)$. These $s+1$ computations are independent from each other such 
that they can also be verified separately in parallel.

Following the idea of one of the referees we tried to avoid the computation of $F(r)$ by computing a
$\Q[X,T]$--Gr\"obner basis of $\gen{I,T-r}$ w.r.t. an elimination ordering (eliminating $X$) by using modular
methods (cf. section \ref{secModStd}) and FGLM--algorithm (cf. \cite{FGLM93}). In this case we directly 
compute $\gen{I,T-r}_{\Q[X,T]} \cap \Q[T] = \gen F$ and may consequently omit the verification. Experiments 
showed that this is as time consuming as the presented method in Algorithm \ref{algAssPrimes}.
\end{rem}

\footnotetext[15]{Here we use the procedure \textsc{modStd} as described in section \ref{secModStd}.}
\footnotetext[16]{All approaches mentioned in Lemma \ref{lemF} are applicable to verify this step.}

\begin{rem}
Knowing the associated primes it is easy to compute the primary ideals using the method of Shimoyama 
and Yokoyama (cf. \cite{SY96}): Let $M_1,\ldots,M_s$ be the associated primes of the zero--dimensional 
ideal $I$ and $\sigma_1,\ldots,\sigma_s$ a system of separators, i.e. $\sigma_i \notin M_i$ and $\sigma_i 
\in M_j$ for $j \neq i$, then the saturation of $I$ w.r.t. $\sigma_i$ is the primary ideal corresponding to $M_i$. 
Each $\sigma_i$ can be chosen as $\prod_{j \neq i} m_j$ whereat $m_j$ is an element of a Gr\"obner basis 
of $M_j$ which is not in $M_i$. The saturation can be computed modularly, similarly to \textsc{modStd} and 
in parallel.
\end{rem}

\section{Examples, timings and conclusion} \label{secExTime}

In this section we provide examples on which we time the algorithms \texttt{modStd} (cf. 
section \ref{secModStd}) resp. \texttt{assPrimes} (cf. section \ref{secAssPrimes}) and
their parallelizations as opposed to the usual algorithms \texttt{std} resp. 
\texttt{minAssGTZ}\footnote{The procedure \texttt{minAssGTZ} is implemented in \singular 
in the library \texttt{primdec.lib} and computes the minimal associated prime ideals of the 
input.} implemented in \textsc{Singular}. Timings are conducted by using the 32-bit version 
of \singular{3-1-2} on an AMD Opteron 6174 with $48$ CPUs, 800 MHz each, 128 GB RAM 
under the Gentoo Linux operating system. All examples are chosen from The SymbolicData 
Project (cf. \cite{G10}).

\begin{rem}
The parallelization of our modular algorithms is attained via multiple processes organized 
by \singular library code. Consequently a future aim is to enable parallelization in the kernel 
via multiple threads.
\end{rem}

We choose the following examples to emphasize the superiority of modular standard basis 
computation and especially its parallelization:

\begin{exmp} \label{ex1}
Characteristic: $0$, ordering: \texttt{dp}\footnote{\emph{Degree reverse lexicographical ordering:} 
Let $X^\alpha, X^\beta \in \Mon(X)$. $X^\alpha >_{dp} X^\beta \, :\Longleftrightarrow \, \deg(X^\alpha) 
> \deg(X^\beta)$ or $(\deg(X^\alpha) = \deg(X^\beta)$ and $\exists \, 1 \leq i \leq n: \; \alpha_n = \beta_n, 
\ldots, \alpha_{i-1} = \beta_{i-1}, \alpha_i < \beta_i)$, where $\deg(X^\alpha) = \alpha_1 + \ldots + \alpha_n$; 
cf. \cite{GP07}.},  \texttt{Cyclic\_8.xml} (cf. \cite{BF91}).
\end{exmp}

\begin{exmp} \label{ex2}
Characteristic: $0$, ordering: \texttt{dp}, \texttt{Paris.ilias13.xml} (cf. \cite{KoLa99}).
\end{exmp}

\begin{exmp} \label{ex3}
Characteristic: $0$, ordering: \texttt{dp}, homog. \texttt{Cyclic\_7.xml} (cf. 
\cite{BF91}).
\end{exmp}

\begin{exmp} \label{ex4}
Characteristic: $0$, ordering: \texttt{ds}\footnote{\emph{Negative degree reverse lexicographical ordering:} 
Let $X^\alpha, X^\beta \in \Mon(X)$. $X^\alpha >_{ds} X^\beta \, :\Longleftrightarrow \, \deg(X^\alpha) 
< \deg(X^\beta)$ or $(\deg(X^\alpha) = \deg(X^\beta)$ and $\exists \, 1 \leq i \leq n: \; \alpha_n = \beta_n, 
\ldots, \alpha_{i-1} = \beta_{i-1}, \alpha_i < \beta_i)$, where $\deg(X^\alpha) = \alpha_1 + \ldots + \alpha_n$; 
cf. \cite{GP07}.}, \texttt{Steidel\_1.xml} (cf. \cite{Pf07}).
\end{exmp}

Table \ref{tabModStd1} summarizes the results where $\texttt{modStd}^*(n)$ denotes the parallelized 
version of the algorithm applied on $n$ cores. In all tables, the symbol "-" indicates out of memory 
failures. All timings are given in seconds.

\begin{table}[hbt]
\begin{center}
\begin{tabular}{|r|r|r|r|r|r|}
\hline
Exmp. & \multicolumn{1}{c|}{\texttt{std}} & \multicolumn{1}{c|}{\texttt{modStd}} 
& \multicolumn{1}{c|}{$\texttt{modStd}^*(4)$} & \multicolumn{1}{c|}{$\texttt{modStd}^*(9)$}
& \multicolumn{1}{c|}{$\texttt{modStd}^*(30)$} \\
\hline \hline
\ref{ex1} & - & 8271 & 4120 & 2927 & 1138 \\ \hline
\ref{ex2} & 37734 & 1159 & 676 & 580 & 380 \\ \hline
\ref{ex3} & 3343 & 3436 & 886 & 408 & 113 \\ \hline
\ref{ex4} & \hspace{1.4cm} - & \hspace{1.4cm} 6 & \hspace{1.4cm} 3 & \hspace{1.4cm} 3 
                & \hspace{1.4cm} 3 \\ \hline 
\end{tabular}
\end{center}
\hspace{15mm}
\caption{Total running times for computing a standard basis of the considered examples via 
\texttt{std}, \texttt{modStd} and its parallelized variant $\texttt{modStd}^*(n)$ for $n = 4,9,30$.} 
\label{tabModStd1}
\end{table}

The basic algorithm \texttt{std} runs out of memory for examples \ref{ex1} and \ref{ex4}.
As mentioned in section \ref{secModStd}, it is possible to parallelize the computation in 
several parts of the algorithm \texttt{modStd}. In many cases it turns out that the final 
test - the verification whether the lifted set of polynomials includes the input and is 
itself a standard basis, see also Remark \ref{remModStdVerification} - is a time consuming 
part. Therefore we extract the timings for the computation without the verification test in Table 
\ref{tabModStd2}, again in seconds.

\begin{table}[hbt]
\begin{center}
\begin{tabular}{|r|r|r|r|r|}
\hline
Exmp. & \multicolumn{1}{c|}{$\texttt{modStd}_{\text{w/o v.}}$} 
& \multicolumn{1}{c|}{$\texttt{modStd}^*_{\text{w/o v.}}(4)$} 
& \multicolumn{1}{c|}{$\texttt{modStd}^*_{\text{w/o v.}}(9)$} 
& \multicolumn{1}{c|}{$\texttt{modStd}^*_{\text{w/o v.}}(30)$} \\
\hline \hline
\ref{ex1} & 7929 & 3751 & 2698 & 920 \\ \hline
\ref{ex2} & 941 & 614 & 552 & 370 \\ \hline
\ref{ex3} & 52 & 38 & 31 & 36 \\ \hline
\ref{ex4} & \hspace{2.1cm} 6 & \hspace{2.1cm} 3 & \hspace{2.1cm} 3 & \hspace{2.1cm} 3 \\ \hline
\end{tabular}
\end{center}
\hspace{15mm}
\caption{Running times for  \texttt{modStd} and $\texttt{modStd}^*(n)$ with $n = 4,9,30$ without 
verification test.} 
\label{tabModStd2}
\end{table}

We consider the following examples for the computation of the associated prime ideals of a 
given zero--dimensional ideal :

\begin{exmp} \label{ex5}
Characteristic: $0$, ordering: \texttt{dp}, \texttt{Becker-Niermann.xml} (cf. \cite{DGP98}).
\end{exmp}

\begin{exmp} \label{ex6}
Characteristic: $0$, ordering: \texttt{dp}, \texttt{FourBodyProblem.xml} (cf. \cite{BM10}).
\end{exmp}

\begin{exmp} \label{ex7}
Characteristic: $0$, ordering: \texttt{dp}, \texttt{Reimer\_5.xml} (cf. \cite{BM10}).
\end{exmp}

\begin{exmp} \label{ex8}
Characteristic: $0$, ordering: \texttt{lp}\footnote{\emph{Lexicographical ordering:} 
Let $X^\alpha, X^\beta \in \Mon(X)$. $X^\alpha >_{lp} X^\beta \, :\Longleftrightarrow \, \exists \, 1 \leq i \leq n: 
\; \alpha_1 = \beta_1, 
\ldots, \alpha_{i-1} = \beta_{i-1}, \alpha_i > \beta_i$; 
cf. \cite{GP07}.}, \texttt{ZeroDim.example\_12.xml} (cf. \cite{G10}).
\end{exmp}

\begin{exmp} \label{ex9}
Characteristic: $0$, ordering: \texttt{dp}, \texttt{Cassou\_1.xml} (cf. \cite{BM10}).
\end{exmp}

Using modular methods via the algorithm \texttt{assPrimes} we apply all three variants 
mentioned in section \ref{secAssPrimes}. 
\vspace{0.05cm}
\begin{enumerate}
\item[(1)] approach of Eisenbud, Hunecke, Vasconcelos (cf. \cite{EHV92}),
\item[(2)] approach of Gianni, Trager, Zacharias (cf. \cite{GTZ88}),
\item[(3)] approach of Monico (cf. \cite{M02}).
\end{enumerate}
\vspace{0.05cm}
We summarize the results of the timings in Table \ref{tabAssPrimes1} and \ref{tabAssPrimes2} 
where $\texttt{assPrimes}^*(n)$ denotes the parallelized version of the algorithm applied on $n$
cores.

\begin{table}[h]
\begin{center}
\begin{tabular}{|r|r|r|r|r|r|r|r|r|r|r|}
\hline
Exmp. & \texttt{minAssGTZ} & \multicolumn{3}{|c|}{\texttt{assPrimes}} & 
\multicolumn{3}{|c|}{$\texttt{assPrimes}^*(4)$} &
\multicolumn{3}{|c|}{$\texttt{assPrimes}^*(9)$} \\
 & & \;\;(1) & \;\;(2) & \;\;(3) & \;\;(1) & \;\;(2) & \;\;(3) & \;\;(1) & \;\;(2) & \;\;(3) \\
\hline \hline
\ref{ex5} & - & 1 & 1 & 0 & 1 & 1 & 1 & 1 & 1 & 1 \\ \hline
\ref{ex6} & - & 169 & 169 & 188 & 104 & 98 & 104 & 95 & 100 & 105 \\ \hline
\ref{ex7} & - & 129 & 131 & 230 & 90 & 87 & 114 & 76 & 77 & 103 \\ \hline
\ref{ex8} & 189 & 4 & 5 & 5 & 10 & 8 & 8 & 8 & 8 & 8 \\ \hline
\ref{ex9} & 589 & 35 & 35 & 35 & 24 & 23 & 19 & 25 & 24 & 25 \\ \hline
\end{tabular}
\end{center}
\hspace{15mm}
\caption{Total running times for computing the associated prime ideals of the considered 
examples via \texttt{minAssGTZ}, \texttt{assPrimes} and its parallelized variant 
$\texttt{assPrimes}^*(n)$ for $n = 4,9$.} \label{tabAssPrimes1}
\end{table}

The usual algorithm \texttt{minAssGTZ} runs out of memory for examples \ref{ex5}, \ref{ex6}
and \ref{ex7}. Analogously to the modular standard basis algorithm, we also list the timings 
needed for \texttt{assPrimes} resp. $\texttt{assPrimes}^*(n)$ without the final verification step - 
the check whether $F(r) \in I$ and the computation of $F_1(r), \ldots, F_s(r)$, see also Remark 
\ref{remAssPrimes} - in Table \ref{tabAssPrimes2}.

\begin{table}[h]
\begin{center}
\begin{tabular}{|r|r|r|r|r|r|r|r|r|r|}
\hline
Exmp. & \multicolumn{3}{|c|}{$\texttt{assPrimes}_{\text{w/o ver.}}$} & 
\multicolumn{3}{|c|}{$\texttt{assPrimes}^*_{\text{w/o ver.}}(4)$} &
\multicolumn{3}{|c|}{$\texttt{assPrimes}^*_{\text{w/o ver.}}(9)$} \\
 & \;\quad(1) & \;\quad(2) & \;\quad(3) 
 & \;\quad(1) & \;\quad(2) & \:\quad(3) 
 & \;\quad(1) & \;\quad(2) & \;\quad(3) \\
\hline \hline
\ref{ex5} & 1 & 1 & 0 & 1 & 0 & 0 & 1 & 1 & 1 \\ \hline
\ref{ex6} & 15 & 14 & 34 & 7 & 7 & 13 & 5 & 5 & 15 \\ \hline
\ref{ex7} & 41 & 37 & 139 & 39 & 38 & 64 & 30 & 26 & 55 \\ \hline
\ref{ex8} & 4 & 5 & 5 & 9 & 8 & 8 & 8 & 8 & 8 \\ \hline 
\ref{ex9} & 7 & 6 & 7 & 5 & 5 & 5 & 5 & 4 & 6 \\ \hline
\end{tabular}
\end{center}
\hspace{15mm}
\caption{Running times for \texttt{assPrimes} and $\texttt{assPrimes}^*(n)$ with $n = 4,9$ without 
final verification step.} \label{tabAssPrimes2}
\end{table}

\begin{con} \
\vspace{0.15cm}
\begin{enumerate}
\item For the computation of Gr\"obner bases resp. standard bases of ideals $I \subseteq \Q[X]$
          w.r.t. global resp. local orderings \textsc{modStd} should be used. This is usually faster even
          without parallel computing.
\item The probabilistic algorithm to compute standard bases works without any restriction to the 
          ordering. It is much faster than the deterministic one. It can be used to obtain ideas in
          Algebraic Geometry and other fields by computing several examples, similarly to computations
          in positive characteristic $20$ years ago when computations of standard bases in characteristic
          zero have been impossible resp. too slow.  
\item A kernel--implementation of \textsc{modStd} could speed up the modular part using the 
          trace--algorithm of Traverso (cf. \cite{T89}).
\item An increasing number of cores used during the parallel computation of standard bases resp. 
          associated primes speeds up the computation if the corresponding problem in positive characteristic
          takes some time to be computed. If the computations in positive characteristic are fast then an increasing
          number of cores may slow down the computations because of too much overhead.
\item In the current implementation Chinese remainder and Farey fractions are not parallelized. Experiments
          (e.g. the computation of the Gr\"obner basis of \texttt{Cyclic\_9}) show that the computations in positive
          characteristic need different time on different cores. Therefore one should apply Chinese remainder and
          Farey fractions already to partial results.
\item For zero--dimensional primary decomposition the modular approach is very efficient. This
          should be extended to higher--dimensional ideals.  
\end{enumerate}
\end{con}

\section{Acknowledgement}

The authors would like to thank Wolfram Decker and Gert-Martin Greuel for helpful discussions to prove 
Theorem \ref{thmModStd}. We also thank Christian Eder for important hints and discussions concerning the 
implementation of the described algorthms. In addition, we thank Frank Seelisch for revealing the observation 
about \textsc{Magma V2.16--11} as specified in section \ref{secIntro}. Finally, we would like to thank the 
anonymous referees whose comments greatly improved the paper.


\begin{thebibliography}{99999999}
\bibitem[{A03}]{A03} Arnold, E. A.: Modular algorithms for computing Gr\"obner 
  bases. Journal of Symbolic Computation 35, 403--419 (2003).
\bibitem[{BF91}]{BF91} Bj{\"o}rck, G.; Fr{\"o}berg, G.: A Faster Way to Count 
  the Solution of Inhomogeneous Systems of Algebraic Equations, with Applications to Cyclic 
  $n$-Roots. Journal of Symbolic Computation 12, 329--336 (1991).
\bibitem[{BM10}]{BM10} Bini, D.; Mourrain, B.: Polynomial test suite. Frisco project (LTR 
  21.024). \newblock {http://www-sop.inria.fr/saga/POL/} (2010).
\bibitem[{BW96}]{BW96} Becker, E.; W\"ormann, T.: Radical computations of zero-dimensional 
  ideals and real root counting. In: Mathematics and Computers in Simulation 42, 561--569 
  (1996).
\bibitem[{DE05}]{DE05} Dickenstein, A.; Emiris, I. Z.: Solving Polynomial Equations. Algorithms
  and Computation in Mathematics, Volume 14, Springer (2005).
\bibitem[{DGP98}]{DGP98} Decker, W.; Greuel, G.-M.; Pfister, G.: Primary
  Decomposition: Algorithms and Comparisons. In: Algorithmic Algebra and Number
  Theory, Springer, 187--220 (1998).
\bibitem[{DGPS10}]{DGPS10} Decker, W.; Greuel, G.-M.; Pfister, G.; Sch{\"o}nemann, H.:
  \newblock {\sc Singular} {3-1-1} --- {A} computer algebra system for polynomial 
  computations. \newblock {http://www.singular.uni-kl.de} (2010).
\bibitem[{E83}]{E83} Ebert, G. L.: Some comments on the modular approach to Gr\"obner bases.
  ACM SIGSAM Bulletin 17, 28--32 (1983).
\bibitem[{EHV92}]{EHV92} Eisenbud, D.; Huneke, C.; Vasconcelos, W.: Direct Methods for Primary
  Decomposition. Inventiones Mathematicae 110, 207--235 (1992).
\bibitem[{FGLM93}]{FGLM93} Faug\`ere, J. C.; Gianni, P.; Lazard, D.; Mora, T.: Efficient Computation
  of  Zero--dimensional Gr\"obner Bases by Change of Ordering. Journal of Symbolic Computation 16,
  329--344 (1993).
\bibitem[{G94}]{G94} Gr\"abe, H.-G.: On lucky primes. Journal of Symbolic Computation 15, 
  199--209 (1994).  
\bibitem[{G10}]{G10} Gr\"abe, H.-G.: The SymbolicData Project --- Tools and Data for Testing 
  Computer Algebra Software. \newblock {http://www.symbolicdata.org} (2010).
\bibitem[{GP07}]{GP07} Greuel, G.-M.; Pfister, G.: A \textsc{Singular} Introduction to 
  Commutative Algebra. Second edition, Springer (2007).
\bibitem[{GTZ88}]{GTZ88} Gianni, P.; Trager, B.; Zacharias, G.: Gr\"obner Bases and Primary
  Decomposition of Polynomial Ideals. Journal of Symbolic Computation 6, 149--167 (1988).
\bibitem[{KG83}]{KG83} Kornerup, P.; Gregory, R. T.: Mapping Integers and Hensel Codes onto 
  Farey Fractions. BIT Numerical Mathematics 23(1), 9--20 (1983).
\bibitem[{KoLa99}]{KoLa99} Kotsireas, I.; Lazard, D.: Central Configurations of the 5-body 
  problem with equal masses in three-dimensional space. Representation theory, dynamical 
  systems, combinatorial and algorithmic methods. Part IV, Zap. Nauchn. Sem. POMI, 258, POMI, 
  St. Petersburg, 292ñ-317 (1999).
\bibitem[{KrLo91}]{KrLo91} Krick, T.; Logar, A.: An Algorithm for the Computation of the 
  Radical of an Ideal in the Ring of Polynomials. Applied Algebra, Algebraic Algorithms and Error 
  Correcting Codes, 9th International Symposium AAECC-9, Springer Lecture Notes in Computer 
  Science 539, 195--205 (1991).
\bibitem[{M02}]{M02} Monico, C.: Computing the Primary Decomposition of
  zero--dimensional Ideals. Journal of Symbolic Computation 34, 451--459 (2002).
\bibitem[{Pa92}]{Pa92} Pauer, F.: On lucky ideals for Gr\"obner bases computations. Journal
  of Symbolic Computation 14, 471--482 (1992).
\bibitem[{Pf07}]{Pf07} Pfister, G.: On Modular Computation of Standard Basis. Analele
  Stiintifice ale Universitatii Ovidius, Mathematical Series XV (1),
  129--137 (2007).
\bibitem[{ST89}]{ST89} Sasaki, T.; Takeshima, T.: A modular method for Gr\"obner-basis 
  construction over $\Q$ and solving system of algebraic equations. Journal of Information 
  Processing 12, 371--379 (1989).
\bibitem[{SY96}]{SY96} Shimoyama, T.; Yokoyama, K.: Localization and Primary Decomposition of 
  Polynomial Ideals. Journal of Symbolic Computation 22, 247--277 (1996).
\bibitem[{T89}]{T89} Traverso, C.: Gr\"obner trace algorithms. Symbolic and Algebraic Computation,
  International Symposium ISSAC '88, Springer Lecture Notes in Computer Science 358, 125--138 
  (1989).
\bibitem[{WGD82}]{WGD82} Wang, P. S.; Guy, M. J. T.; Davenport, J. H.: $P$--adic Reconstruction 
  of Rational Numbers. ACM SIGSAM Bulletin 16, 2--3 (1982).
\bibitem[{W87}]{W87} Winkler, F.: A $p$--adic approach to the computation of Gr\"obner bases.
  Journal of Symbolic Computation 6, 287--304 (1987).
\end{thebibliography}
\end{document}